\renewcommand \a{\alpha}
\renewcommand \b{\beta}
\newcommand \la{\lambda}
\newcommand \ve{\varepsilon}
\newcommand \id{\mathrm{id}}
\newcommand \br{\mathbb{R}}
\newcommand \bc{\mathbb{C}}
\newcommand \Ker{\operatorname{Ker}}
\newcommand \Der{\operatorname{Der}}
\newcommand \End{\operatorname{End}}
\newcommand \Ric{\operatorname{Ric}}
\renewcommand \Re{\operatorname{Re}}
\newcommand \Span{\operatorname{Span}}
\newcommand \Tr{\operatorname{Tr}}
\newcommand \cB{\mathcal{B}}
\newcommand \ig{\mathfrak{i}}
\newcommand\ag{\mathfrak a}
\newcommand\g{\mathfrak g}
\newcommand\h{\mathfrak h}
\newcommand\z{\mathfrak z}
\newcommand\m{\mathfrak m}
\newcommand \gl{\mathfrak{gl}}
\newcommand \s{\mathfrak{s}}
\newcommand \n{\mathfrak{n}}
\newcommand \ad{\operatorname{ad}}
\newcommand \diag{\operatorname{diag}}
\DeclareMathOperator{\ric}{Ric}
\newcommand \<{\langle}
\renewcommand \>{\rangle}
\newcommand \ip{\<\cdot,\cdot\>}
\newcommand \GL{\mathrm{GL}}
\newtheorem*{rep@theorem}{\rep@title}
\newcommand{\newreptheorem}[2]{%
\newenvironment{rep#1}[1]{%
\def\rep@title{#2 \ref{##1}}%
\begin{rep@theorem}}%
{\end{rep@theorem}}}
\theoremstyle{plane}
\newtheorem{theorem}{Theorem}
\newtheorem*{theorem*}{Theorem}
\newtheorem*{corollary*}{Corollary}
\newtheorem*{conj*}{Conjecture}
\newtheorem{lemma}{Lemma}
\newtheorem{proposition}{Proposition}
\newtheorem*{prop*}{Proposition}
\theoremstyle{definition}
\newtheorem*{definition*}{Definition}
\theoremstyle{remark}
\newtheorem{remark}{Remark}
\begin{document}

\title{Solvable Lie groups of negative Ricci curvature}

\author{Y.~Nikolayevsky}
\address{Y.~Nikolayevsky, Department of Mathematics and Statistics, La Trobe University, Melbourne, Australia 3086}
\email{y.nikolayevsky@latrobe.edu.au}

\author{Yu.G.~Nikonorov}
\address{Yu.G.~Nikonorov, South Mathematical Institute of VSC RAS, 22 Markus st, \linebreak Vladikavkaz, Russia 362027}
\email{nikonorov2006@mail.ru}

\subjclass[2010]{Primary 53C30, 22E25}


\thanks{The first author is partially supported by ARC Discovery Grant DP130103485. The second author is supported in part by the
State Maintenance Program for the Leading Scientific Schools of the Russian Federation (grant NSh-921.2012.1) and by Federal Target
Grant ``Scientific and educational personnel of innovative Russia'' for 2009-2013 (agreement no. 8206, application no. 2012-1.1-12-000-1003-014).
}

\keywords{Solvable Lie algebra, nilradical, negative Ricci curvature}

\begin{abstract}
We consider the question of whether a given solvable Lie group admits a left-invariant metric of strictly negative Ricci curvature.
We give necessary and sufficient conditions of the existence of such a metric for the Lie groups the nilradical of whose Lie algebra
is either abelian or Heisenberg or standard filiform, and discuss some open questions.
\end{abstract}

\maketitle

\section{Introduction}
\label{s:intro}

The fundamental question of Riemannian geometry is whether (and when) a given manifold admits a Riemannian metric with a particular sign of the curvature. Naturally, in the context of homogeneous geometry, the same question is being asked for left-invariant metrics. In that case the curvature is entirely expressed in terms of the algebraic structure of the given homogeneous space and one expects the answer to be stated in both topological and algebraic terms.

The conditions on the sign of the \emph{sectional curvature} $K_\sigma$ are quite restrictive and the homogeneous spaces whose sectional curvature has a particular sign are mainly understood. For $K_\sigma > 0$ the question was settled in \cite{Wal, BB1} who showed that a homogeneous space admits a left-invariant metric with $K_\sigma > 0$ if and only if it is diffeomorphic to either CROSS or to a space from a short finite list (so-called Wallach and Allof-Wallach spaces). By \cite{Ale, Hei}, a homogeneous space of negative sectional curvature is isometric to a solvmanifold the nilradical $\n$ of whose Lie algebra $\g$ has codimension one and there exists $Y \in \g \setminus \n$ such that all the eigenvalues of the restriction of $\ad_Y$ to $\n$ have positive real part. Flat homogeneous spaces were completely described in \cite{Ale, BB2}: every such space is isometric to a solvmanifold the nilradical $\n$ of whose Lie algebra $\g$ is abelian and the restrictions of all $\ad_Y, \; Y \in \g \setminus \n$, to $\n$ are skew-symmetric.

For the Ricci curvature, the positive case was settled down by Milnor \cite{Mil} (for Lie groups), and by Berestovskii \cite{Ber} (in the general case), who proved that a homogeneous space admits a left-invariant metric with $\Ric > 0$ if and only if it is compact and has a finite fundamental group.
By a beautiful result of \cite{AK}, any Ricci-flat homogeneous space is flat.

Much less is known, however, about Riemannian homogeneous spaces of negative Ricci curvature. By the following theorem, no unimodular solvable Lie group (in particular, no nilpotent group) admits a left-invariant metric with $\Ric < 0$:
\begin{theorem}[\cite{DM}]\label{t:dm}
Any left-invariant metric with $\Ric \le 0$ on a solvable unimodular Lie group is Ricci-flat.
\end{theorem}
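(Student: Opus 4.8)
The plan is to express the Ricci curvature of a left-invariant metric on a solvable Lie group in terms of the structure of the Lie algebra, and then exploit unimodularity to obtain a trace identity that forces Ricci-flatness. Fix a left-invariant metric, identified with an inner product $\ip$ on the Lie algebra $\g$. Recall the standard formula for the Ricci curvature $\Ric$ of a left-invariant metric, which decomposes into several terms: one governed by the mean curvature vector $H$ (defined by $\<H,X\> = \Tr(\ad_X)$), and terms quadratic in the structure constants coming from the symmetric and skew-symmetric parts of $\ad$. The crucial feature of the unimodular case is that $\Tr(\ad_X) = 0$ for all $X \in \g$, so the mean curvature vector $H$ vanishes identically.

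I would first compute the trace of the Ricci endomorphism, the scalar curvature, using the fact that for a left-invariant metric it can be written as
\begin{equation*}
\mathrm{scal} = -\tfrac14 \sum_{i,j} |[e_i,e_j]|^2 - \tfrac12 \sum_{i,j} \<[e_i,e_j],e_j\>\<e_i, \cdot\> \cdots - |H|^2,
\end{equation*}
where $\{e_i\}$ is an orthonormal basis; when $H=0$ the scalar curvature reduces to a manifestly non-positive expression, being the sum of a nonpositive term and the (nonpositive, since $\g$ is unimodular) Killing-type contribution. The key point is that under the unimodularity hypothesis the scalar curvature is expressible as a sum whose every summand is $\le 0$, so $\mathrm{scal} \le 0$ with equality precisely when all the relevant brackets vanish.

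Now I would combine this with the hypothesis $\Ric \le 0$. Since $\mathrm{scal} = \Tr(\Ric)$ is the sum of the sectional-type Ricci eigenvalues and $\Ric \le 0$ forces $\mathrm{scal} \le 0$, the two one-sided bounds can only be reconciled if $\mathrm{scal} = 0$: indeed, from the unimodular scalar curvature formula one actually reads off a lower bound showing $\mathrm{scal} \ge 0$ is \emph{not} generally available, so the real mechanism is different and this is where care is needed. The honest route is to show that for a unimodular solvable (more generally, completely solvable or nilpotent) Lie algebra there is a direction in which the Ricci curvature is nonnegative — typically one uses that the derived algebra, or a suitable $\ad$-semisimple direction, yields $\Ric(X,X) \ge 0$ for some unit $X$ via the vanishing of $H$. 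Combined with $\Ric \le 0$, this gives $\Ric(X,X) = 0$ for that direction, and then an argument propagating the equality (using that $\Ric \le 0$ means all eigenvalues are $\le 0$ while their sum, the scalar curvature, must vanish) forces $\Ric \equiv 0$.

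The main obstacle is the third step: establishing that unimodularity guarantees a nonnegative Ricci direction, which is exactly where the vanishing of the mean curvature vector $H$ must be leveraged correctly. Once one has both $\Ric \le 0$ everywhere and at least one nonnegative Ricci value, the scalar curvature (the trace of $\Ric$) is squeezed to zero, whence every eigenvalue of the Ricci endomorphism vanishes and the metric is Ricci-flat. I expect the delicate bookkeeping to lie in writing the scalar curvature as a sum of squares modulo the $H$-term and verifying that each square-type contribution individually vanishes, rather than merely summing to zero; controlling the cross terms between the symmetric and skew-symmetric parts of $\ad$ is the technical heart of the argument.
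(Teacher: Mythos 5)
First, a structural point you could not have known but which frames everything: the paper contains \emph{no proof} of this statement. It is quoted from Dotti Miatello's paper [DM] and used as a black box; the only consequence the authors ever invoke is that a solvable unimodular Lie group admits no left-invariant metric with $\Ric < 0$. So there is no proof in the paper to compare yours against, and your argument must stand on its own. On its own terms it has two concrete gaps. (i) Your claim that unimodularity makes the scalar curvature a sum of individually nonpositive terms is false: with $H=0$ one has $\mathrm{scal}=-\frac14\sum_{i,j}\|[E_i,E_j]\|^2-\frac12\Tr B$, and the Killing term has no preferred sign for solvable unimodular algebras --- for $E(2)$ (brackets $[X,Y]=Z$, $[X,Z]=-Y$, orthonormal basis $\{X,Y,Z\}$) one gets $B(X,X)=-2$, hence $-\frac12\Tr B=+1>0$. (ii) Fatally, your closing step is a non sequitur: $\Ric\le 0$ together with \emph{one} unit vector on which $\mathrm{ric}$ vanishes does not ``squeeze'' the trace to zero; a negative semidefinite operator can have a kernel and still have negative trace. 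To get $\Ric\equiv 0$ from $\Ric\le 0$ you need $\mathrm{scal}=\Tr\Ric\ge 0$, and nothing in your argument supplies that inequality (your first step argues the opposite, useless, sign). Moreover you never actually construct the direction with $\mathrm{ric}\ge 0$: for \emph{nilpotent} $\g$ one takes $0\ne Z\in\z(\g)\cap[\g,\g]$, where $B=0$ and $[Z,\g]=0$ give $\mathrm{ric}(Z)=\frac14\sum_{i,j}\<[E_i,E_j],Z\>^2>0$ (Milnor's argument), but for genuinely solvable $\g$ the extra term $-\frac12\sum_i\|[Z,E_i]\|^2$ destroys the sign, and handling it is the entire difficulty.

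There is also a more basic obstruction: the statement \emph{as quoted here} cannot be proved, because it is contradicted by a standard example. Take $\g$ with basis $\{X,Y,Z\}$, brackets $[X,Y]=Y$, $[X,Z]=-Z$, $[Y,Z]=0$ (the Lie algebra of $E(1,1)$: solvable and unimodular), and the inner product making this basis orthonormal --- the standard metric of Sol geometry. Then $H=0$, $B=\diag(2,0,0)$, and the paper's own formula for $\mathrm{ric}$ gives
\begin{equation*}
\mathrm{ric}(X)=-\tfrac12\cdot 2-\tfrac12(1+1)=-2,\qquad \mathrm{ric}(Y)=\mathrm{ric}(Z)=-\tfrac12\cdot 1+\tfrac14\cdot 2=0,
\end{equation*}
with all off-diagonal entries zero, so $\Ric=\diag(-2,0,0)\le 0$; yet the metric is not Ricci-flat, and not even flat (the sectional curvature of the plane $\Span(Y,Z)$ equals $+1$). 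This example shows exactly why your mechanism cannot be repaired: here $\Ric\le 0$ and null Ricci directions abound, but $\Ric\not\equiv 0$ and $\mathrm{scal}=-2<0$. What is true --- and is all the paper actually uses, e.g.\ in the proof of Theorem 4 --- is the strict statement that a solvable unimodular Lie group admits no left-invariant metric with $\Ric<0$; that follows from the result of [DLM] quoted in the Introduction (a unimodular Lie group carrying a metric with $\Ric<0$ is noncompact and semisimple). A correct treatment would either prove that strict version or exhibit the counterexample above to the semidefinite one.
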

By \cite{AK}, any such metric is flat. Further in \cite{DLM} it was proved that a unimodular Lie group which admits a left-invariant metric with $\Ric < 0$ is noncompact and semisimple. Examples of such metrics were constructed on
$\mathrm{SL}(n,\mathbb{R}), \; n \ge 3$ \cite{LDM} and on some complex simple Lie groups \cite{DLM}. To the best of our knowledge, however, the general (nonunimodular) case has not been studied in the literature, although an important subclass of left-invariant metrics with negative Ricci curvature, the Einstein metrics of negative scalar curvature, has been extensively studied in the past decades by many authors, including the present authors.

In this paper we ask the following question: \emph{Which solvable Lie groups admit a left-invariant metric of negative Ricci curvaturee?}

The Ricci curvature of a left-invariant metric on a Lie group $G$ can be entirely computed from the algebraic data: the structure of the Lie algebra $\g$ of $G$ and the inner product $\ip$ on $\g$ (see Section~\ref{ss:ric} for details). With a slight abuse of terminology, we will speak of the Ricci curvature of the metric Lie algebra $(\g, \ip)$.

After some preliminaries (Section~\ref{s:pre}), we prove in Section~\ref{s:abelian} the following theorem, which gives some necessary conditions and some sufficient conditions for the existence of an inner product of negative Ricci curvature on a solvable Lie algebra.

\begin{theorem}\label{t:neg}
Suppose $\g$ is a solvable Lie algebra. Let $\n$ be the nilradical of $\g$ and $\z$ be the centre of $\n$. Then
\begin{enumerate}[{\rm (1)}]
  \item \label{it:neg1}
  If $\g$ admits an inner product of negative Ricci curvature, then there exists $Y \in \g$ such that $\Tr \ad_Y > 0$ and all the eigenvalues of the restriction of the operator $\ad_Y$ to $\z$ have a positive real part;

  \item \label{it:neg2}
  If there exists $Y \in \g$ such that all the eigenvalues of the restriction of $\ad_Y$ to $\n$ have positive real part, then $\g$ admits an inner product of negative Ricci curvature.
\end{enumerate}
\end{theorem}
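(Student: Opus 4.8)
The plan is to base both parts on the Ricci formula recalled in Section~\ref{ss:ric}: for an orthonormal basis $\{e_i\}$ of $(\g,\ip)$ and the mean curvature vector $H$ defined by $\<H,X\>=\Tr\ad_X$,
\[
\ric(X,X)=-\tfrac12\sum_i|[X,e_i]|^2-\tfrac12\Tr(\ad_X^2)+\tfrac14\sum_{i,j}\<[e_i,e_j],X\>^2-\<[H,X],X\>.
\]
Three structural facts drive the argument. Since $\g$ is solvable, $[\g,\g]\subseteq\n$, so the positive term vanishes whenever $X\perp\n$, and each $X\in\n$ is $\ad$-nilpotent on $\g$, whence $\Tr(\ad_X^2)=0$ for $X\in\n$ and $\Tr\ad_Y=\Tr(\ad_Y|_\n)$ for all $Y$. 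Moreover $H\perp\n$ (because $\Tr\ad_X=0$ on $\n$), and a Jacobi-identity computation shows $\z$ is an ideal of $\g$ on which $\n$ acts trivially; consequently the operators $D_Y:=\ad_Y|_\z$ pairwise commute, and $\z$ decomposes into $\ad_\g$-invariant real generalized weight spaces $\z_s$ on which every eigenvalue of $\ad_Y|_{\z_s}$ has real part $\mu_s(Y)$, a linear functional of $Y$.

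For part~\eqref{it:neg1} I would test the Ricci curvature against invariant subspaces of the centre. Fix any nonzero $\ad_\g$-invariant $\u\subseteq\z$ and sum $\ric$ over an orthonormal basis $\{Z_b\}$ of $\u$. For $Z\in\u$ one has $\Tr(\ad_Z^2)=0$ and $\ad_Z$ kills $\n$, so the only negative contribution is $-\tfrac12\sum_{b,p}|[Z_b,Y_p]|^2=-\tfrac12\sum_p\|D_{Y_p}|_\u\|^2$, where $\{Y_p\}$ is an orthonormal basis of $\n^\perp$. The crucial point is that this is matched exactly by the subset of terms $\tfrac14\sum\<[e_i,e_j],Z_b\>^2$ coming from the pairs $(Y_p,Z_b)$ with $Z_b\in\u$ (using $\ad_\g$-invariance of $\u$), while all remaining terms of the positive sum are nonnegative. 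Hence $\sum_{\u}\ric\ge-\Tr(\ad_H|_\u)$. If $\Ric<0$ the left side is negative, so $\Tr(\ad_H|_\u)>0$ for every such $\u$; applying this to $\u=\z_s$ gives $(\dim\z_s)\,\mu_s(H)>0$, i.e.\ $\mu_s(H)>0$ for all $s$. Thus every eigenvalue of $\ad_H|_\z$ has positive real part. Finally $H\neq0$, since $H=0$ would make $\g$ unimodular and then Theorem~\ref{t:dm} would preclude $\Ric<0$; therefore $\Tr\ad_H=|H|^2>0$, and $Y=H$ satisfies both requirements.

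For part~\eqref{it:neg2} the plan is a direct construction. Writing $D:=\ad_Y|_\n$, the hypothesis and Lyapunov's theorem furnish an inner product on $\n$ in which the $\ast$-adjoint symmetric part $S(D)=\tfrac12(D+D^{\ast})$ is positive definite; rescaling an adapted (Jordan) basis, we may further assume the commutator $[D,D^{\ast}]$ is arbitrarily small relative to $S(D)$, and $\Tr\ad_Y=\Tr D>0$ automatically. I then build inner products on $\g$ with $\n\perp\n^\perp$, this chosen product on $\n$, the mean curvature aligned along $Y$, and scaling parameters that enlarge $\n$ while making every transverse direction other than $Y$ negligible. Since $[\g,\g]\subseteq\n$, the positive term vanishes on $\n^\perp$ and $\Tr(\ad_X^2)=0$ on $\n$, so in the limit the Ricci form restricted to $\n$ tends to $\<(\tfrac12[D,D^{\ast}]-(\Tr D)\,S(D))X,X\>$, which is negative definite once $[D,D^{\ast}]$ is small; and on the $Y$-direction it tends to $-\sum_j(\Re\lambda_j)^2<0$, where $\lambda_j$ are the eigenvalues of $D$.

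The main obstacle is entirely in part~\eqref{it:neg2}: one must run the scaling so that \emph{all} blocks of the Ricci operator, not merely its diagonal entries, become negative definite. Two features demand care. First, the surviving cross terms produce the indefinite commutator $[D,D^{\ast}]$, which is exactly why the inner product on $\n$ must be chosen to make $D$ nearly normal so that $(\Tr D)\,S(D)$ dominates. Second, the transverse directions $\m\subseteq\n^\perp$ with $\m\neq\Span\{Y\}$ need not satisfy any eigenvalue condition, so their contributions and the off-diagonal Ricci entries must be suppressed by a careful multi-scale limit (enlarging $\n$ faster than $\m$); verifying that the full symmetric endomorphism $\Ric$ stays negative definite throughout this limit is the delicate step.
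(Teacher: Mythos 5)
Your part~\eqref{it:neg1} is correct, and it takes a route that is related to but genuinely distinct from the paper's. The paper fixes an eigenvalue $\xi$ of $(\ad_{f_1})_{|\z}$, uses commutativity and the Lie (Frobenius) theorem to produce a single common eigenvector $Z=Z_1+Z_2\i$ of all the $(\ad_{f_j})_{|\z}$, and evaluates the quadratic form $\<R_1\cdot,\cdot\>$ on $\Span(Z_1,Z_2)$. You instead sum $\ric$ over an orthonormal basis of an $\ad_\g$-invariant weight subspace of $\z$ and use the exact cancellation between $-\tfrac12\sum_{b,p}\|[Z_b,Y_p]\|^2$ and the mixed part of the positive term; this cancellation is indeed exact (invariance of the subspace is what makes the projection absorb the whole bracket), all remaining terms have the right sign, and the trace identity $\Tr(\ad_H|_{\z_s})=\dim(\z_s)\,\mu_s(H)$ finishes the argument. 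Your appeal to Theorem~\ref{t:dm} for $H\ne 0$ is legitimate (in fact $\Tr(\ad_H|_{\z})>0$ already forces $H\neq0$). This half is sound, and the averaging argument avoids the paper's case analysis at the eigenvector.

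Part~\eqref{it:neg2}, however, has a genuine gap, located exactly at the step you flag as delicate, and it is not a removable technicality: the scaling you describe can fail for \emph{every} value of the parameters. The problem is the transverse directions $Y_j$, $j\ge2$. Making them ``negligible'' ($f_j=\ve Y_j$, $\ve\to0$) sends $A_j$ to zero, so in the limit these directions become central and their Ricci curvature tends to $0$, not to anything negative; whether $\Ric<0$ at a finite stage is then decided by a competition among terms of the same small order, and the sign can come out wrong. Concretely, let $\g=\Span(Y,Y_2,e_1,e_2)$ with abelian nilradical $\n=\Span(e_1,e_2)$ and $[Y,e_i]=e_i$, $[Y_2,e_1]=e_2$, $[Y_2,e_2]=-e_1$, $[Y,Y_2]=e_1$. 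Here $D=\ad_Y|_{\n}=\id$, so \emph{every} inner product on $\n$ meets your Lyapunov/near-normality requirements; take the standard one scaled by $c^2$, with $f_1=Y$ and $f_2=\ve Y_2$ unit. Using \eqref{nnteor}--\eqref{eq:r3}, the principal $2\times2$ submatrix of $\Ric$ in the directions $e_1/c,\,f_2$ is
\begin{equation*}
\begin{pmatrix} \tfrac12 c^2\ve^2-2 & -\tfrac32 c\ve \\ -\tfrac32 c\ve & -\tfrac12 c^2\ve^2 \end{pmatrix},
\end{equation*}
whose determinant is $-\tfrac14c^4\ve^4-\tfrac54c^2\ve^2<0$. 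So $\Ric$ is indefinite for all $c,\ve>0$: the contamination of the $f_2$-direction through the block $R_2$ of \eqref{eq:r2} (generated by $tB_1$ and $A_1^tB_1$, i.e.\ by the bracket $[Y,Y_2]\ne0$, which your scaling does not remove) always defeats the negative terms $-\tfrac12B_1^tB_1-L$ in \eqref{eq:r3}, because $L_{22}=0$ here ($\ad_{Y_2}|_{\n}$ is skew-symmetric).

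This example isolates the two ingredients your proposal is missing and which the paper supplies. First, the paper does not suppress the $Y_j$: it performs a genuine orbit-closure degeneration (Proposition~\ref{p:clo}) via an anisotropic scaling \emph{inside} $\n$, built from the block-triangular form of Lemma~\ref{l:reallie} with rapidly growing weights $d_a=3^a$. This kills $[\n,\n]$, all $[Y_j,Y_k]$, and the strictly lower-triangular parts, while keeping every $A_j$ alive as a normal matrix; in the degenerate algebra $D_i=B_j=0$, hence $R_2=0$ identically and $R_1=-tA_1^s<0$, with no error terms to control. (In the example, this is what eliminates $[Y,Y_2]$: replace $Y_2$ by $Y_2-e_1$.) Second, even after that, $R_3=-L$ is only negative \emph{semi}definite whenever some nontrivial linear combination of the $A_j$ is skew-symmetric---precisely the situation above---and the paper needs the genericity perturbation of Lemma~\ref{l:nonskew} to exclude this. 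Your choice of inner product on $\n$ is governed by $D$ alone (Lyapunov plus Jordan rescaling) and cannot detect either obstruction, so a substitute for both steps is required before part~\eqref{it:neg2} can be completed along your lines.
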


The following is an immediate consequence of Theorem~\ref{t:neg}.

\begin{corollary*}
A solvable Lie algebra $\g$ with an abelian nilradical $\n$ admits an inner product of negative Ricci curvature if and only if there exists $Y \in \g$
such that all the eigenvalues of the restriction of $\ad_Y$ to $\n$ have positive real part.
\end{corollary*}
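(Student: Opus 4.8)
The plan is to derive the corollary directly from the two halves of Theorem~\ref{t:neg}, the only additional input being a structural observation about the abelian case. First I would record that when the nilradical $\n$ is abelian, its centre coincides with the whole of $\n$: every element of an abelian Lie algebra is central, so $\z = \n$. This identity is what lets the necessary condition in part~\ref{it:neg1} and the sufficient condition in part~\ref{it:neg2}, which in the general theorem refer to two different subalgebras ($\z$ and $\n$ respectively), speak about the same object, and it is precisely this coincidence that turns a one-sided implication into an equivalence.

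For the sufficiency direction there is nothing to prove beyond quoting part~\ref{it:neg2}: if there exists $Y \in \g$ such that all eigenvalues of the restriction of $\ad_Y$ to $\n$ have positive real part, then $\g$ admits an inner product of negative Ricci curvature.

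For the necessity direction I would apply part~\ref{it:neg1}: an inner product of negative Ricci curvature on $\g$ produces some $Y \in \g$ with $\Tr \ad_Y > 0$ and with all eigenvalues of the restriction of $\ad_Y$ to $\z$ having positive real part. Substituting $\z = \n$ immediately upgrades this to the statement that all eigenvalues of the restriction of $\ad_Y$ to $\n$ have positive real part, which is exactly the asserted condition. (The trace condition furnished by part~\ref{it:neg1} is not needed for the conclusion here; in fact it is automatic, since on $\n = \z$ the quantity $\Tr \ad_Y|_\n$ equals the sum of the eigenvalues and is therefore positive.)

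Since all the genuine work is carried by Theorem~\ref{t:neg}, I expect no real obstacle at this stage: the sole point requiring verification is the elementary identity $\z = \n$ in the abelian case, after which the equivalence follows formally from the fact that the necessary condition of part~\ref{it:neg1} and the sufficient condition of part~\ref{it:neg2} now coincide.
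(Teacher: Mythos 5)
Your proposal is correct and matches the paper's own treatment: the paper derives the Corollary as an ``immediate consequence'' of Theorem~\ref{t:neg}, exactly via the observation that $\z=\n$ when $\n$ is abelian, so that part~\eqref{it:neg1} gives necessity and part~\eqref{it:neg2} gives sufficiency. Your side remark that the trace condition is automatic is also sound, since $[\g,\g]\subset\n$ forces $\Tr\ad_Y=\Tr\,(\ad_Y)_{|\n}$, which is the (positive) sum of the real parts of the eigenvalues.
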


Note that the latter property of $\g$ in the Corollary is equivalent to the following one:
There exists $Y' \in \g$ such that the restriction of $\ad_{Y'}$ to $\n$ is {\it a stable linear operator}, i.~e. all its eigenvalues have negative real parts
(see e.~g. \cite{Gant} or \cite{LiW} for a discussion on stable linear operators and stable matrices).

We further develop the approach taken in the Corollary (obtaining the condition for $\Ric < 0$ for classes of solvable algebras with a given nilradical) for two important classes of nonabelian nilpotent algebras serving as nilradicals: the Heisenberg algebra and the standard filiform algebra.

Recall that the \emph{Heisenberg Lie algebra} $\h_{2p+1}$ of dimension $l=2p+1, \; p \ge 1$, is defined by the relations $[X_i, X_{p+i}]=Z$ for $i=1, \dots, p$, relative to a basis  $\{X_1, \dots, X_{2p}, Z\}$, where $[X_i,X_j]=0$ if $|i-j| \ne p$, and $Z$ spans the centre $\z$ of $\h_{2p+1}$. Let $\g$ be a solvable Lie algebra with the nilradical $\n=\h_{2p+1}$. For any $X \in \g$, the vector $Z$ is an eigenvector of the restriction of $\ad_X$ to $\n$, so that $[X,Z]=\la(X)Z$ for a one-form $\la$ on $\g$, and moreover, $(\ad_X)_{|\n}$ descends to a well-defined linear map $\Phi(X) \in \End(\n/\z)$. Let $d_i(X) \in \bc, \; i=1, \dots, 2p$, be the eigenvalues of $\Phi(X)$, each listed with its algebraic multiplicity. In Section~\ref{s:heis} we prove the following theorem.

\begin{theorem}\label{t:heis}
A solvable Lie algebra $\g$ with the Heisenberg nilradical admits an inner product of negative Ricci curvature if and only if there exists $Y \in \g$ such that in the above notation,
\begin{equation*}
    \la(Y) + \sum\nolimits_{i: \Re \, d_i(Y) < 0} \Re \, d_i(Y) > 0.
\end{equation*}
\end{theorem}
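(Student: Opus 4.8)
The plan is to reduce the statement to a single trace inequality and then treat the two implications by an explicit construction and by a trace estimate, respectively. First I would record the algebraic constraints coming from the Heisenberg structure. The bracket descends to a symplectic form $\omega$ on $\n/\z$ via $[u,v]=\omega(u,v)Z$, and since $(\ad_X)_{|\n}$ is a derivation fixing the line $\z$ with eigenvalue $\la(X)$, the induced map $\Phi(X)$ is conformally symplectic: $\omega(\Phi(X)u,v)+\omega(u,\Phi(X)v)=\la(X)\,\omega(u,v)$. From this I would deduce that the eigenvalues split into pairs $\{d,\la(X)-d\}$, that $\omega$ pairs the generalised eigenspace of $d$ with that of $\la(X)-d$, and hence that $\Tr(\ad_X)_{|\n}=(p+1)\la(X)$. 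Writing $V_-\subseteq\n/\z$ for the sum of the real generalised eigenspaces of $\Phi(Y)$ with $\Re\,d_i(Y)<0$ and $\m\subseteq\n$ for its full preimage (an $\ad_Y$-invariant ideal of $\n$ containing $\z$, on which $\ad_Y$ has trace $\la(Y)+\sum_{\Re\,d_i(Y)<0}\Re\,d_i(Y)$), the asserted inequality becomes simply $\Tr(\ad_Y)_{|\m}>0$.

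For sufficiency I would first make the directions of $\n^\perp$ other than $Y$ inert: rescaling such a complementary basis to have large norm sends the adjoint actions of the corresponding unit vectors to zero, so those directions decouple from $\n$ and contribute only nonpositive diagonal Ricci terms, reducing the essential computation to the rank-one algebra $\br Y\ltimes\n$. There I would exploit that $\omega$ pairs $E_{d_i}$ with $E_{\la(Y)-d_i}$ to choose a symplectic basis $\{X_i,X_i'\}$ of $\n/\z$ in which every contracting direction $X_i$ (with $\rho_i:=\Re\,d_i(Y)<0$) is matched to its expanding partner $X_i'$ and $[X_i,X_i']=Z$; the non-semisimple parts are reduced to this form by the standard $\diag(1,\ve,\ve^2,\dots)$ rescaling bringing $\Phi(Y)$ arbitrarily close to block-diagonal form, and one checks that the skew part of $\Phi(Y)$ (present for genuinely complex eigenvalues) enters the diagonal Ricci components nonpositively, the symmetric part still contributing $\Re\,d_i(Y)$. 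Taking the inner product that makes this basis orthogonal with scales $|X_i|^2=a_i,\ |X_i'|^2=b_i,\ |Z|^2=s$, so that $[\widehat X_i,\widehat X_i']=c_i\widehat Z$ with $c_i^2=s/(a_ib_i)$, the two-step nilpotent formula gives a positive central term $\tfrac12\sum_i c_i^2$ and negative terms $-\tfrac12 c_i^2$ on $\n/\z$, while the extension adds $-(\Tr S)S=-(p+1)\la(Y)S$. Up to the decoupled directions the diagonal entries read $\langle\Ric\,\widehat Z,\widehat Z\rangle=\tfrac12\sum_i c_i^2-(p+1)\la(Y)^2$ and $\langle\Ric\,\widehat X_i,\widehat X_i\rangle=-\tfrac12 c_i^2-(p+1)\la(Y)\rho_i$, the expanding partners being automatically negative. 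Choosing $c_i^2$ just above $2(p+1)\la(Y)|\rho_i|$ on the contracting pairs and $c_i\to 0$ on the others makes every $X_i$-entry negative, while the centre entry is negative exactly when $\tfrac12\sum_i c_i^2<(p+1)\la(Y)^2$; since the minimal admissible central sum is $(p+1)\la(Y)\sum_{\rho_i<0}|\rho_i|$, such a choice exists precisely when $\la(Y)+\sum_{\Re\,d_i(Y)<0}\Re\,d_i(Y)>0$, which is the hypothesis. The reason the condition is a \emph{sum} over contracting directions, rather than one inequality per symplectic pair, is exactly that all the $c_i^2$ compete against the single central budget $(p+1)\la(Y)^2$.

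For necessity I would start from Theorem~\ref{t:neg}(\ref{it:neg1}): if the metric has $\Ric<0$ then, taking $Y=H$ the mean curvature vector (nonzero by Theorem~\ref{t:dm}), one gets $\Tr\ad_H>0$ and $\Re\,\la(H)>0$, whence $\la(H)>0$ by the structure lemma. To upgrade this to $\Tr(\ad_H)_{|\m}>0$ I would compute the trace of the Ricci form over an orthonormal basis of the contracting ideal $\m$: this is negative since $\Ric<0$, and expanding it through the Ricci formula I would seek to show that the two-step nilpotent contribution of $\n$ on $\m$, together with the skew and mixed terms, is dominated by the extension term $(\Tr S)\Tr(S_{|\m})$, leaving $\Tr(\ad_H)_{|\m}>0$. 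I expect this direction to be the main obstacle: unlike in the construction the inner product is now arbitrary and $\m$ need not be invariant under $\ad_H^{\,T}$, so $\Tr(S_{|\m})$ is not literally $\Tr(\ad_H)_{|\m}$ and the nilpotent and skew cross-terms must be controlled carefully, while a higher-dimensional complement $\n^\perp$ must be accommodated simultaneously; I would anticipate needing a more symmetric choice of $Y$ than $H$, or a variational/extremal argument over inner products, to pin down the sharp summed inequality.
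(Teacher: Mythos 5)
Your sufficiency argument is, at its core, the paper's own construction: reduce to the rank-one extension $\br Y\oplus\n$, take a metric that is diagonal with respect to a basis adapted to the symplectic pairing, and play the positive central term $\tfrac12\sum_i c_i^2$ against the budget $(p+1)\la(Y)^2$, with each contracting direction forcing $c_i^2>2(p+1)\la(Y)|\rho_i|$; your final inequality coincides with the paper's. However, three reduction steps you wave through are handled carefully in the paper and are not automatic. First, the brackets $[Y,Y_j]\in\z$ among complementary directions (which make $R_2\ne 0$) must be killed; the paper does this by a degeneration (Proposition~\ref{p:clo}). Second, your ``standard $\diag(1,\ve,\ve^2,\dots)$ rescaling'' to semisimplify $\Phi(Y)$ must be compatible with the symplectic form, or the limit destroys the central brackets $[X_i,X_i']=Z$ on which the whole mechanism depends (and the limit algebra, having abelian nilradical and contracting eigenvalues, would \emph{not} admit $\Ric<0$); the paper instead invokes Richardson's orbit-closure theorem for the adjoint action of $\mathrm{Sp}(2p,\br)$, which acts by automorphisms of $\n$. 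Third, ``nonpositive diagonal terms'' from the decoupled directions are not enough: with $R_2=0$ the Ricci form is negative definite only if the block $R_3$ in \eqref{eq:r3} is \emph{strictly} negative, which fails exactly when some combination of the $A_j$ is skew-symmetric; the paper removes this by a general-position perturbation of the metric (Lemma~\ref{l:nonskew}, used inside Lemma~\ref{l:heissufreduction}).

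The genuine gap is necessity, and you say so yourself: your plan is to trace the Ricci form over the contracting ideal and hope the nilpotent, skew and mixed terms are dominated, but you do not control them, and you anticipate needing ``a more symmetric choice of $Y$'' or a variational argument. The paper closes exactly this gap with Lemma~\ref{l:m-}, whose key point (a) is structural, not metric: since $\la>0$ while $\Re d_1+\Re d_2<0$, the derivation/root-space identity $(C-(d_1+d_2)\id)^k[X_1,X_2]=\sum_i\binom{k}{i}[(C-d_1\id)^iX_1,(C-d_2\id)^{k-i}X_2]$ forces $[\m_-,\m_-]=0$, i.e.\ the contracting subspace is isotropic for the form $K$. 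This yields the quantitative bound $\Tr(K^tK\pi_-)\le\tfrac12\Tr(K^tK)$, together with $\Tr([A_j,A_j^t]\pi_-)\ge-\|\pi_-v_j\|^2$ and $\Tr(A_1^s\pi_-)=\sum_{\Re d<0}\Re d$. The second trick you are missing is to add to $\Tr(R_1\pi_-)$ the single diagonal entry $\<R_1e_{2p+1},e_{2p+1}\>$ at the centre: its positive contributions $\tfrac14\Tr(K^tK)+\tfrac12\sum_j\|v_j\|^2$ exactly absorb the negative bounds $-\tfrac14\Tr(K^tK)-\tfrac12\sum_j\|\pi_-v_j\|^2$ from the contracting trace, leaving $-t\bigl(\la+\sum_{\Re d<0}\Re d\bigr)$, which must be negative since $R_1<0$. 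Note also that the paper needs only negativity of the block $R_1$ from \eqref{eq:r1} (so the arbitrary rank-$m$ complement causes no trouble), and the vector $f_1\parallel H$ suffices; no extremal argument over inner products is required. Without these two ideas your necessity direction does not go through.
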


Further on, in Section~\ref{s:fili} we consider solvable Lie algebras whose nilradical is \emph{filiform} (has the maximal possible degree of nilpotency) \cite{Ver}. More specifically, we require the nilradical to be \emph{standard filiform Lie algebra}, which is defined as the $l$-dimensional Lie algebra $L_l$ having a basis $X_1, \dots, X_l$ such that $[X_1, X_i]=X_{i+1}, \; i=2, \dots, l-1$, $[X_1, X_l]=0$, and $[X_i,X_j]=0$ when $i,j \ge 2$. Note that any filiform Lie algebra admits a basis for which the former relations are satisfied (but in general, not the latter ones).

Let $\g$ be a solvable Lie algebra with the nilradical $L_l$. We can assume that $l \ge 4$ (as $L_2$ is abelian and $L_3$ is the Heisenberg algebra). The algebra $L_l$ has a (unique) codimension one abelian ideal $\ig=\Span(X_2, \dots, X_l)$ and the one-dimensional centre $\br X_l$. Both of them are characteristic ideals of $L_l$ (they are invariant under the action of any derivation on $L_l$; see Section~\ref{s:fili}). Define the one-forms $\la$ and $\iota$ on $\g$ as follows: for $Y \in \g, \; [Y,X_l]=\la(Y) X_l$ and $\iota(Y)=\Tr((\ad_Y)_{|\ig})$. We have the following theorem.

\begin{theorem}\label{t:fili}
Let $\g$ be a solvable Lie algebra with the nilradical $\n=L_l, \; l \ge 4$. The algebra $\g$ admits an inner product of negative Ricci curvature if and only if there exists $Y \in \g$ such that $\la(Y) > 0$ and $\iota(Y) > 0$.
\end{theorem}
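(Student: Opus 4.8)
The plan is to prove the two implications separately, after one preliminary reduction, basing all curvature computations on the standard expression for the Ricci curvature of a left-invariant metric recalled in Section~\ref{ss:ric}.

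\emph{Preliminary reduction.} Since $\ig$, the centre $\z=\br X_l$, and the whole flag $\br X_l\subset\br X_{l-1}\oplus\br X_l\subset\cdots$ are characteristic ideals of $\n=L_l$, every derivation of $\n$, in particular each $(\ad_Y)|_\n$ with $Y\in\g$, is upper triangular in the basis $X_1,\dots,X_l$ with diagonal entries in arithmetic progression: writing $\alpha(Y),\beta(Y)$ for the eigenvalues of $\ad_Y$ on $X_1$ and on $X_2$ (modulo $\br X_3\oplus\cdots$), the eigenvalues on $X_2,\dots,X_l$ are $\beta(Y)+k\,\alpha(Y)$, $k=0,\dots,l-2$. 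Hence $\la=\beta+(l-2)\alpha$ and $\iota=(l-1)\beta+\tfrac12(l-1)(l-2)\alpha$, and, because $[\g,\g]\subseteq\n$ forces $\ad_Y$ to vanish on $\g/\n$, also $\Tr\ad_Y=\alpha(Y)+\iota(Y)$. Eliminating $\beta$ yields
\[
\Tr\ad_Y=\alpha(Y)+\iota(Y)=\tfrac{2}{l-2}\,\la(Y)+\tfrac{l(l-3)}{(l-1)(l-2)}\,\iota(Y),
\]
with both coefficients positive for $l\ge4$; I will use these identities freely.

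\emph{Necessity.} Suppose $(\g,\ip)$ has $\Ric<0$ and let $H$ be the mean curvature vector, $\<H,X\>=\Tr\ad_X$; then $H\perp\n$ and $\Tr\ad_H=|H|^2$. I would extract the two inequalities from the partial traces $\Tr(\Ric|_\z)$ and $\Tr(\Ric|_\ig)$, both of which are negative. Fix an orthonormal basis adapted to the splitting $\n^\perp\oplus\br X_1\oplus\ig$. In each partial trace the Killing-form term drops out, since $B|_\n=0$, and the negative contributions of the brackets $[\n^\perp,\z]$ (respectively $[\n^\perp,\ig]$) cancel exactly against the matching squared terms in the curvature formula. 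For $\z$, centrality of $X_l$ in $\n$ means there is no negative term to offset the squares $\<[X_1,X_{l-1}],X_l\>^2$ produced inside $\n$, so $\Tr(\Ric|_\z)=(\text{nonnegative})-\la(H)$, and $\Ric<0$ forces $\la(H)>0$; for $\ig$ the shift $(\ad_{X_1})|_\ig$ \emph{does} have a matching negative term, which cancels, leaving $\Tr(\Ric|_\ig)=(\text{nonnegative})-\iota(H)$, whence $\iota(H)>0$. Thus $Y=H$ satisfies both conditions (and in particular $H\ne0$, recovering nonunimodularity, consistent with Theorem~\ref{t:dm}).

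\emph{Sufficiency.} Given $Y$ with $\la(Y),\iota(Y)>0$, I would construct the inner product directly. One may take $Y\perp\n$ and, by giving the remaining complementary directions large norm so that their normalized bracket actions become small, reduce to dominating the rank-one algebra $\br Y\ltimes\n$; on the latter I would use a diagonal family of inner products adapted to the filiform flag, scaling $X_1,\dots,X_l$ and the length of $Y$, and read off $\Ric$ from the formula of Section~\ref{ss:ric}. Here $\la(Y)>0$ controls the central direction $X_l$ (precisely the term left uncancelled in the necessity computation), while $\iota(Y)>0$, the \emph{trace} of $(\ad_Y)|_\ig$ rather than the positivity of its individual eigenvalues, controls the total expansion on the abelian ideal $\ig$. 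The decisive point is that the nilpotent shift $N=(\ad_{X_1})|_\ig$ cannot be discarded: the eigenvalues of $\ad_Y$ on $\ig$ form the arithmetic progression above and may include non-positive ones (so that the symmetric part of $\ad_Y$ is never positive definite), and it is exactly the coupling of consecutive eigenspaces by $N$ that lets the single scalar inequality $\iota(Y)>0$ replace the far stronger ``all eigenvalues have positive real part'' hypothesis of the sufficiency direction of Theorem~\ref{t:neg}.

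I expect the sufficiency to be the main obstacle: choosing the weights in the diagonal family and estimating the off-diagonal and shift-induced terms so that $\Ric$ becomes negative definite under only the two scalar hypotheses $\la(Y)>0$ and $\iota(Y)>0$, and justifying the reduction that absorbs the directions of $\g$ transverse to $\br Y\oplus\n$ without destroying negativity. The necessity, by contrast, should follow cleanly from the two partial-trace computations once the cancellations above are made precise.
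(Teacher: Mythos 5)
Your necessity argument is correct in outline and is essentially the paper's own: the paper obtains $\la(f_1)>0$ from Theorem~\ref{t:neg}(\ref{it:neg1}) (whose proof is precisely your partial-trace computation over the centre), and obtains $\iota(f_1)>0$ by computing $\sum_{j\ge 2}\<R_1e_j,e_j\>$ in an orthonormal basis with $e_1\perp\ig$: the $\Ric^\n$ contribution traces to zero because $\ig$ is an abelian ideal, the commutator term contributes $+\tfrac12\|v\|^2\ge 0$, and the remainder is $-t\,\iota(f_1)$, which must therefore be negative. Your identities $\la=\beta+(l-2)\alpha$, $\iota=(l-1)\beta+\tfrac12(l-1)(l-2)\alpha$ and the expression of $\Tr\ad_Y$ as a positive combination of $\la(Y)$ and $\iota(Y)$ also match the paper.

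The genuine gap is the sufficiency: what you give is a plan, not a proof, and you say so yourself. All of the content of the theorem sits exactly in the steps you postpone, namely choosing the weights and showing that the two scalar inequalities alone force $\Ric<0$. The paper's route has three concrete ingredients absent from your sketch. First, the reduction to $\g=\br Y\oplus\n$ is algebraic, not metric: by the triangular form \eqref{eq:derfili} of derivations of $L_l$, rank $\ge 3$ is impossible and rank $2$ is settled at once by Theorem~\ref{t:neg}(\ref{it:neg2}); your proposal to absorb transverse directions by giving them large norm would in addition need the perturbation of Lemma~\ref{l:nonskew} to keep $R_3$ negative definite. Second, instead of ``estimating the off-diagonal and shift-induced terms'' of $(\ad_Y)_{|\n}$, the paper eliminates them: degenerating via Proposition~\ref{p:clo} with $T_s=e^{sN}$, $N=\diag(1,2,\dots,l)$, replaces $\g$ by an algebra with the same nilradical in which $(\ad_Y)_{|\n}=\diag(a,d,a+d,\dots,(l-2)a+d)$ exactly, while the brackets $[X_1,X_i]=X_{i+1}$ survive. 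Third --- the actual core --- for diagonal metrics $e_i=a_iX_i$ the matrix $R_1$ is diagonal with entries linear in the free parameters $\xi_i=\tfrac12 a_1^2a_i^2a_{i+1}^{-2}>0$ and in $ta$, $td$; negative definiteness for some admissible choice of the $\xi_i$ is then a convex-cone (linear programming) statement, namely that $aV_1+dV_2$ lies in the open convex cone spanned by $F_1,\dots,F_{l-2},E_1,E_2$, and solving it is what produces precisely $(l-2)a+d>0$ and $(l-2)a+2d>0$, i.e.\ $\la(Y)>0$ and $\iota(Y)>0$. Your observation that the shift $(\ad_{X_1})_{|\ig}$ is what lets the trace condition $\iota(Y)>0$ substitute for eigenvalue-wise positivity is the right intuition --- the $\xi_i$ are exactly the contributions of those brackets --- but without the weight construction and the cone computation there is no argument that the two inequalities suffice when some eigenvalues $ka+d$ are negative, which is the entire difficulty.
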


In the last section we collect some open questions and conjectures. The main of them, motivated by the above theorems is the following
(in a slightly vague formulation). Is it true that a solvable Lie algebra $\g$ admits an inner product with $\Ric < 0$ if and only if there exists a vector $Y \in \g$ such that the real parts of the eigenvalues of the restriction of $\ad_Y$ to the nilradical $\n$ of $\g$ satisfy certain linear inequalities which are determined by the structure of $\n$? Speculating a little further we might suggest that such inequalities represent the fact that $\Re \, (\ad_Y)_{|\n}$ belongs to a certain open convex hull (see the details in Section~\ref{s:q}).

\section{Preliminaries}
\label{s:pre}

\subsection{The Ricci operator}
\label{ss:ric}

Let $G$ be a Lie group with a left-invariant metric $Q$
obtained by the left translations from an inner product $\ip$ on the Lie algebra $\g$ of $G$.
Let $B$ be the Killing form of $\g$, and let $H \in \g$ be the \emph{mean curvature vector} defined by
$\<H, X\> = \Tr \ad_X$.

The Ricci curvature $\mathrm{ric}$ of the metric Lie group $(G,Q)$ at the identity is given by
\begin{equation*}
    \mathrm{ric}(X)=-\<[H,X],X\>-\frac12 B(X,X)-\frac12 \sum\nolimits_i \|[X,E_i]\|^2 +\frac14 \sum\nolimits_{i,j} \<[E_i,E_j],X\>^2,
\end{equation*}
for $X \in \g$, where $\{E_i\}$ is an orthonormal basis for $(\g, \ip)$ (e.g. \cite{Ale}). Equivalently, one can define the Ricci operator $\ric$ of the metric Lie algebra $(\g, \ip)$, the symmetric operator associated to $\mathrm{ric}$, by

\begin{equation} \label{eq:riccAl}
\ric = -\frac{1}{2} \sum\nolimits_i \ad_{E_i}^{t} \ad_{E_i} + \frac{1}{4} \sum\nolimits_i \ad_{E_i}\ad_{E_i}^{t}-\frac{1}{2}B -(\ad_H)^{s},
\end{equation}
where $A^t$ is the operator adjoint to $A$ and $(\ad_H)^{s}=\frac12(\ad_H+\ad_H^t)$ is the symmetric part of $\ad_H$.

If $(\n, \< \cdot, \cdot \>)$ is a nilpotent metric Lie algebra, then $H = 0$ and $B = 0$, and we get

\begin{equation}\label{eq:riccinilexplicit}
\begin{gathered}
\<\ric^{\n} X, Y \> = \frac14 \sum\nolimits_{i,j} \<X, [E_i, E_j]\> \<Y, [E_i, E_j]\> -\frac12 \sum\nolimits_{i,j} \<[X, E_i], E_j\> \<[Y, E_i], E_j]\>.\\
\ric^{\n} = -\frac{1}{2} \sum\nolimits_i \ad_{E_i}^{t} \ad_{E_i} + \frac{1}{4} \sum\nolimits_i \ad_{E_i}\ad_{E_i}^{t}.
\end{gathered}
\end{equation}

We will use a more explicit form of \eqref{eq:riccAl} in the case when $\g$ is solvable. Denote $\n$ the nilradical of $\g$, the maximal nilpotent ideal of $\g$. Clearly, $[\g,\g]\subset \n$, but in general, $[\g,\g] \neq \n$. It is known that $\dim(\n)\geq\frac12\big(\dim(\g)+\dim(\z(\g)\big)$,
where $\z(\g)$ is the center of $\g$ \cite[Theorem~5.2]{VGO}. Denote $\ag=\n^\perp$ and put $l =\dim\n$ and $m =\dim\ag$ (we call $m$ the \emph{rank} of $\g$), with $l+m=n=\dim\g$. Choose orthonormal bases $\{e_i\}$ for $\n$, and $\{f_k\}$ for $\ag$ in such a way that
\begin{equation*}
t:=\Tr(\ad(f_1))\geq 0,  \quad  \Tr(\ad(f_j))=0,  \quad  2 \leq j \leq m.
\end{equation*}
It is easy to see that for a non-unimodular Lie algebra $\g$ we have $f_1=\|H\|^{-1}H, \; t= \Tr(\ad(f_1))=\|H\|>0$. If $\g$ is unimodular, we can choose $f_1 \in \ag$ arbitrarily (and $t=0$).

Relative to the basis $\{e_1,...,e_l, f_1,...,f_m\}$, the matrices of the operators $\ad_{f_j}$ and $\ad_{e_i}$ have the form
\begin{equation}\label{eq:adad}
\ad_{f_j} = \left(
\begin{array}{cc}
 A_j  & B_j \\
 0  & 0 \\
 \end{array}
 \right), \quad
\ad_{e_i} = \left(
\begin{array}{cc}
 D_i  & C_i\\
 0 & 0 \\
 \end{array}
 \right),
\end{equation}
for some $(l \times l)$-matrices $A_j$, $D_i$  and $(l \times m)$-matrices $B_j$, $C_i$, and the matrix of the Ricci operator of the solvable metric Lie algebra $(\g, \ip)$ has the form (see the proof of \cite[Theorem~3]{NikNik})
\begin{equation}
\label{nnteor}
\Ric = \left( {{\begin{array}{*{20}c}
 R_1 \hfill & R_2 \hfill\\
 R_2^t \hfill & R_3 \hfill\\
 \end{array} }} \right),
\end{equation}
where
\begin{align} \label{eq:r1}
R_1 &= \Ric^\mathfrak{n} + \frac{1}{2} \sum\nolimits_{j=1}^{m}[A_j,A_j^t] + \frac{1}{4} \sum\nolimits_{j=1}^{m}B_jB_j^t - t A_1^s, \\
R_2 &= -\frac{1}{2} \Big( \sum\nolimits_{i=1}^{l}D_i^tC_i +\sum\nolimits_{j=1}^{m} A_j^tB_j +t B_1 \Big), \label{eq:r2}\\
R_3 &= -\frac{1}{2}\sum\nolimits_{j=1}^{m}B_j^tB_j-L \label{eq:r3},
\end{align}
where $L$ is an $(m \times m)$-matrix with the entries $L_{pq}= \Tr(A_p^sA_q^s)$, $A_j^s = \frac{1}{2} (A_j^t+A_j)$ is the \emph{symmetric part} of $A_j$, $t=\Tr(A_1)=\Tr(A_1^s) \geq 0$ and $\Ric^\n$ is the matrix of the Ricci operator of the metric nilpotent Lie algebra $(\n, \ip_{\n})$ relative to the basis $\{e_1,....,e_l\}$ which from \eqref{eq:riccinilexplicit} is easily seen to be given by

\begin{equation}
\label{eq:riccAln}
\Ric^\mathfrak{n} = -\frac{1}{2} \sum\nolimits_{i=1}^{l} D_i^{t}D_i +\frac{1}{4} \sum\nolimits_{i=1}^{l}D_iD_i^{t}.
\end{equation}

\subsection{Orbit closure}
\label{ss:clo}

Let $\g$ be a Lie algebra with the underlying linear space $\br^n$. We denote $\mu$ the Lie bracket of $\g$, so that $\mu: \Lambda^2\br^n \to \br^n$ is the defined by $\mu(X,Y)=[X,Y]$, for $X,Y \in \br^n$. The map $\mu$ is an element of the space $\mathcal{L} \subset \Lambda^2(\br^{n*}) \otimes \br^n$ of Lie brackets on $\br^n$ (skew-symmetric bilinear maps satisfying the Jacobi identity). The space $\mathcal{L}$ is acted upon by the group $\GL(n)$ as follows (``change of basis"): for $T \in \GL(n)$ and $\mu \in \mathcal{L}$ we define $T.\mu \in \mathcal{L}$ by $(T.\mu)(X,Y)=T^{-1}\mu(TX,TY)$. It is clear that the Lie algebra defined by the bracket $T.\mu$ on $\br^n$ is isomorphic to the one defined by the bracket $\mu$. As $\mathcal{L}$ is defined by polynomial equations, any element of the closure of the orbit $\GL(n).\mu$ of $\mu$ (in the usual topology of $\Lambda^2(\br^{n*}) \otimes \br^n$) is again a Lie bracket, but the corresponding Lie algebra $\bar{\g}(\br^n, \nu)$ may not be isomorphic to $(\br^n, \mu)$. We say that $\bar{\g}$ is a \emph{degeneration} of $\g$ ($\bar{\g}$ is usually ``more abelian" than $\g$, see e.g. \cite{Bur, NP}).

The following proposition is elementary, but useful.

\begin{proposition}\label{p:clo}
Suppose $\mu$ and $\nu$ are Lie brackets on $\br^n$ such that $\nu$ belongs to the closure of the $\GL(n)$ orbit of $\mu$. If the Lie algebra $(\br^n, \nu)$ admits an inner product of negative Ricci curvature, then so does the Lie algebra $(\br^n, \mu)$.
\end{proposition}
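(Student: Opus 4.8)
The plan is to exploit the compatibility between the $\GL(n)$-action on Lie brackets and the corresponding action on inner products, together with the continuity of the Ricci operator in the bracket and the openness of the condition $\Ric<0$.

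The first and key step is an equivariance observation. For any $T\in\GL(n)$, the map $T\colon(\br^n,T.\mu)\to(\br^n,\mu)$ is a Lie algebra isomorphism, since $T\big((T.\mu)(X,Y)\big)=\mu(TX,TY)$ by the definition of the action. Transporting a given inner product $\ip$ on the source through $T$, that is, setting $\<U,V\>_T:=\<T^{-1}U,T^{-1}V\>$ on the target, turns $T$ into an isometry of metric Lie algebras $(\br^n,T.\mu,\ip)\to(\br^n,\mu,\<\cdot,\cdot\>_T)$. Since the Ricci operators of isometric metric Lie algebras are conjugate, they have the same signature; in particular, $(\br^n,T.\mu,\ip)$ has $\Ric<0$ if and only if $(\br^n,\mu,\<\cdot,\cdot\>_T)$ does.

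Next I would fix an inner product $\ip$ making the Ricci operator of $(\br^n,\nu)$ negative definite, and choose a sequence $T_k\in\GL(n)$ with $T_k.\mu\to\nu$, which exists because $\nu$ lies in the closure of $\GL(n).\mu$. With $\ip$ held fixed, each ingredient of \eqref{eq:riccAl}---the operators $\ad_{E_i}$, their adjoints, the mean curvature vector $H$, and the Killing form $B$---is polynomial in the structure constants of the bracket, so the assignment $\mu'\mapsto\Ric(\br^n,\mu',\ip)$ is continuous. Hence $\Ric(\br^n,T_k.\mu,\ip)\to\Ric(\br^n,\nu,\ip)$, and since negative definiteness is an open condition on symmetric operators, $\Ric(\br^n,T_k.\mu,\ip)<0$ for all sufficiently large $k$.

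To conclude, I would fix one such $k$ and invoke the equivariance of the first step: the inner product $\<\cdot,\cdot\>_{T_k}$ then makes the Ricci operator of $(\br^n,\mu)$ negative definite, as required. I expect the only delicate point to be the first step, namely the precise verification that changing the bracket by $T$ while fixing the metric is isometric to fixing the bracket and changing the metric by $T^{-1}$; once this compatibility is established, the rest is a routine continuity-and-openness limiting argument.
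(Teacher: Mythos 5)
Your proof is correct and follows essentially the same route as the paper: fix the inner product of negative Ricci curvature for $\nu$, use continuity of the Ricci operator in the structure constants together with openness of negative definiteness to find $T$ with $\Ric(\br^n,T.\mu,\ip)<0$, and then transport the metric through the isomorphism $(\br^n,T.\mu)\cong(\br^n,\mu)$. The only difference is that you spell out the equivariance/isometry step explicitly, which the paper compresses into the single remark that the latter Lie algebra is isomorphic to $(\br^n,\mu)$.
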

\begin{proof}
Let $\ip$ be such an inner product on $(\br^n, \nu)$. As the Ricci tensor depends continuously on the structural constants of the Lie algebra relative to a fixed basis, there is a $T \in \GL(n)$ such that the metric Lie algebra $(\br^n, T.\mu, \ip)$ has negative Ricci curvature. But the latter Lie algebra is isomorphic to the Lie algebra $(\br^n, \mu)$.
\end{proof}

\subsection{Technical lemmas}
\label{ss:tech}

We will need the following two facts.

The first one is a ``real" modification of the Lie Theorem (this should be well-known; we supply a short proof for the sake of completeness).
\begin{lemma}\label{l:reallie}
Let $\s$ be a real solvable Lie algebra and $V$ be a real $\s$-module. Then there exists a basis $\mathcal{B}$ for $V$ relative to which all the elements $g \in \s$ are block-lower-triangular, with the diagonal blocks of sizes either $1 \times 1$ with the corresponding diagonal entry $\la_j(g)$, or $2 \times 2$, with the corresponding diagonal block $G_i(g)= \begin{pmatrix} \a_i(g) & \b_i(g) \\ -\b_i(g) & \a_i(g) \end{pmatrix}$, where $\la_j, \a_i, \b_i$ are linear forms on $\s$ such that $\b_i \ne 0$.
\end{lemma}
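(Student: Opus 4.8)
The plan is to reduce to the classical Lie Theorem over $\bc$ and then descend to the real form, arguing by induction on $\dim V$. The inductive step rests on a single claim: every nonzero real solvable $\s$-module contains an invariant subspace of dimension one or two on which the action already has the required normal form.

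First I would complexify. Set $V^{\bc}=V\otimes_{\br}\bc$ and let $J$ denote the associated complex conjugation, whose fixed set is $V$. Since $\s$ acts on $V$ by real operators, its extended action on $V^{\bc}$ commutes with $J$. Passing to the complex solvable Lie algebra $\s^{\bc}=\s\otimes\bc$ acting on the complex space $V^{\bc}$, the classical Lie Theorem yields a common eigenvector $w\in V^{\bc}$, so that $g\cdot w=\mu(g)\,w$ for all $g\in\s$, where $\mu\colon\s\to\bc$ is an $\br$-linear form (the restriction to $\s$ of the $\bc$-linear weight on $\s^{\bc}$). Writing $\mu=\a+\i\b$ with $\a,\b$ real linear forms identifies the candidates for the diagonal data.

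The heart of the argument is extracting a real invariant subspace from $w$. Writing $w=u+\i v$ with $u,v\in V$ and comparing real and imaginary parts of $g\cdot w=\mu(g)\,w$—which is legitimate because each $g$ acts by a real operator—gives
\[
g\cdot u=\a(g)\,u-\b(g)\,v,\qquad g\cdot v=\b(g)\,u+\a(g)\,v .
\]
If $\b=0$ then $\mu=\a=:\la$ is real and at least one of $u,v$ is a nonzero eigenvector in $V$ with real eigenvalue $\la(g)$, producing a one-dimensional invariant subspace. If $\b\ne0$ I would first verify that $u,v$ are $\br$-linearly independent: were $w$ a complex multiple of a single real vector, its eigenvalue would be forced to be real, contradicting $\b\ne0$. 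Hence $W=\Span_{\br}(u,v)$ is a two-dimensional invariant subspace on which, in the basis $(u,v)$, each $g$ acts by the matrix $\bigl(\begin{smallmatrix}\a(g)&\b(g)\\-\b(g)&\a(g)\end{smallmatrix}\bigr)$, i.e.\ by exactly the block $G_i(g)$ of the statement with $\a_i=\a$, $\b_i=\b\ne0$.

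With the claim established, the theorem follows by induction on $\dim V$. To land on the \emph{lower}-triangular shape I would apply the claim to the dual module $V^*$ (again a solvable $\s$-module) to produce a maximal proper invariant subspace $U\subset V$ of codimension one or two carrying the prescribed action on $V/U$; the inductive hypothesis applied to $U$ supplies the remaining basis, and ordering the basis so that $U$ is spanned by the later vectors makes every $g$ block-lower-triangular with diagonal blocks of the two allowed types. The forms $\la_j,\a_i,\b_i$ are automatically linear, being real and imaginary parts of the Lie-theoretic weights $\mu$. The only genuine subtlety is this descent step—checking that a non-real weight yields an honest two-dimensional real subspace (the linear independence of $u$ and $v$) and that its diagonal block has precisely the rotation–scaling normal form; the assembly of the flag is then routine.
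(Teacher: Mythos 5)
Your proof is correct, and its core is the same as the paper's: complexify, invoke the classical Lie Theorem for $\s^\bc$ acting on $V^\bc$ to get a common eigenvector $w=u+\i v$ with weight $\mu=\a+\i\b$, and split into the cases $\b=0$ (a real one-dimensional invariant subspace) and $\b\ne 0$ (where $u,v$ are independent and $\Span_\br(u,v)$ is invariant with the rotation--scaling block). Your handling of the degenerate case is in fact slightly more careful than the paper's, which splits on ``$X$ real'' versus ``$X_1,X_2$ non-collinear'' without noting that a non-real eigenvector proportional to a real vector forces a real weight. Where you diverge is the assembly of the flag: to get \emph{lower}-triangular form you pass to the dual module $V^*$, take the annihilator $U$ of the small invariant subspace there, and induct on the subspace $U$, placing the $1\times1$ or $2\times 2$ block for $V/U$ at the top left. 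This works --- the contragredient action preserves solvability, and the negative transpose of $\begin{pmatrix} \a & \b \\ -\b & \a \end{pmatrix}$ has the same form --- but it is an unnecessary detour: the paper simply inducts on the quotient $V/L$ by the small invariant subspace $L$ and orders the basis with $L$'s vectors last, which puts $L$'s block at the bottom right and yields lower-triangularity directly, with no dualization and no need to check that the normal form survives taking duals.
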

\begin{proof}
The proof essentially mimics that for the classical Lie Theorem. All the elements $g \in \s$ have a common eigenvector $X \in V^\bc$. If $X$ is real, then $gX=\la(g)X$ for some linear form $\la$ on $V$, and $V/(\br X)$ is again an $\s$-module. Otherwise, $X=X_1+X_2 \mathrm{i}$, where $X_1, X_2 \in V$ are non-collinear, in which case $L=\Span(X_1,X_2)$ is an $\s$-module and the representation $G$ of $\s$ on $L$ relative to the basis $\{X_1,X_2\}$ is given by $G(g)=\begin{pmatrix} \a(g) & \b(g) \\ -\b(g) & \a(g) \end{pmatrix}$, for some linear forms $\a, \b$ on $\s,\; \b \ne 0$. Then $V/L$ is again a $\s$-module and the proof follows by induction.
\end{proof}

We will also need the following ``general position" lemma.

\begin{lemma}\label{l:nonskew}
Let $V$ be a real linear space and $\s \subset \gl(V)$ a solvable subalgebra. Then for an open, dense set of inner products on $V$, no nonzero element of $\s$ is skew-symmetric.
\end{lemma}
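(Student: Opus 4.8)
The plan is to exploit the block-lower-triangular normal form for $\s$ provided by Lemma~\ref{l:reallie}. Fix a basis $\cB$ as in that lemma, so that every $g \in \s$ is block-lower-triangular with diagonal $1\times 1$ blocks $\la_j(g)$ and $2\times 2$ rotation-type blocks $G_i(g)$. An element $g$ is skew-symmetric with respect to an inner product $\ip$ if and only if $g$ is diagonalizable over $\bc$ with purely imaginary eigenvalues \emph{and} the chosen inner product makes the eigenspace decomposition orthogonal; in particular, a necessary condition for \emph{any} $g$ to be skew-symmetric with respect to \emph{some} inner product is that all its diagonal entries $\la_j(g)$ vanish. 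This observation is what I would use to cut the problem down to a single linear condition on $g$.

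First I would reduce to showing that the set $\mathcal{U}$ of inner products for which no nonzero $g \in \s$ is skew-symmetric is open and dense, treating openness and density separately. Openness is the easier half: skew-symmetry of a \emph{fixed} $g$ with respect to $\ip$ is the closed linear condition $\langle g X, Y\rangle + \langle X, gY\rangle = 0$, and the condition ``some nonzero $g$ is skew" is a statement about the (compact) projective space $\mathbb{P}(\s)$, so the complement is open by a routine compactness argument in the two variables $(\ip, [g])$. For density, the key step is to produce, near any given inner product, one for which no nonzero element of $\s$ is skew. Here I would single out a nonzero linear form among the $\la_j$ (if all $\la_j \equiv 0$ on $\s$, i.e. $\s$ acts with purely ``rotational'' diagonal, a separate argument is needed — see below), and argue that the subset $\s_0 = \bigcap_j \ker \la_j \subset \s$ is the only place where skew-symmetric elements can live, since a skew-symmetric operator must have zero trace on each invariant piece and in fact zero diagonal in $\cB$.

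The main work, then, is to rule out skew-symmetric elements inside $\s_0$, those $g$ all of whose diagonal entries vanish. For such a $g$ in the block-triangular form, the diagonal blocks are $0$ (for the $1\times 1$ slots) or $G_i(g) = \b_i(g)\begin{pmatrix} 0 & 1 \\ -1 & 0\end{pmatrix}$ (for the $2\times 2$ slots), so $g$ is strictly block-lower-triangular except for these antisymmetric diagonal blocks. I would argue by induction on $\dim V$ using the flag from Lemma~\ref{l:reallie}: a skew operator preserves the orthogonal complement structure, and the genuinely lower-triangular (off-diagonal) entries of $g$ must be matched against the transpose, forcing them to vanish for generic $\ip$. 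Concretely, I expect to choose the inner product so that the flag subspaces are in ``general position'' relative to their orthogonal complements; the set of $\ip$ failing this is contained in a finite union of proper real-algebraic subvarieties (one nontrivial off-diagonal coupling at a time), hence has empty interior, giving density.

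The step I anticipate as the main obstacle is handling the purely rotational case, where $\s_0 = \s$ and every $g$ has vanishing $\la_j$, so the naive trace/diagonal argument gives no traction and one genuinely confronts nilpotent-plus-rotation operators that \emph{could} a priori be skew. The resolution should come from the \emph{solvability} of $\s$: by Lie's theorem the derived algebra $[\s,\s]$ acts by strictly block-lower-triangular operators (the linear forms $\la_j, \a_i, \b_i$ all vanish on $[\s,\s]$), so any $g \in \s$ decomposes as a ``semisimple-looking'' rotational part plus a nilpotent part from $[\s,\s]$, and skew-symmetry would force the nilpotent part to be skew — but a nonzero nilpotent operator is never skew-symmetric with respect to \emph{any} inner product. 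Pinning down exactly which inner products make the rotational part skew, and showing those form a nowhere-dense set, is the delicate point; I would phrase it as a polynomial (non)vanishing condition on $\ip$ and invoke that a nontrivial real-analytic condition cuts out a closed set with empty interior, completing the density argument.
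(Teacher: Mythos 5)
Your reduction (skew--symmetry forces all the diagonal data $\la_j(g)$, $\a_i(g)$ to vanish) and your compactness argument for openness are fine, but the density half --- which is the actual content of the lemma --- is never established. Density amounts to showing that the polynomial condition on the Gram matrix $Q$ of $\ip$ cutting out the bad inner products (linear dependence of the matrices $QM_i+M_i^tQ$, where the $M_i$ are a basis of $\s$) is \emph{not identically satisfied} on the positive definite cone; invoking ``a nontrivial real-analytic condition has nowhere dense zero set'' is circular unless you exhibit at least one inner product for which no nonzero element of $\s$ is skew. That witness is exactly what the paper's proof constructs, and it is cheap once Lemma~\ref{l:reallie} is available: take the basis $\cB$ of that lemma and multiply \emph{one} of the two vectors in each $2\times2$ block by $2$. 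Relative to the rescaled basis every $g\in\s$ is still block-lower-triangular, its $2\times2$ diagonal blocks become $\begin{pmatrix} \a_i(g) & 2\b_i(g)\\ -\tfrac12\b_i(g) & \a_i(g)\end{pmatrix}$, and skew-symmetry would force $\la_j(g)=\a_i(g)=0$ together with $2\b_i(g)=\tfrac12\b_i(g)$, i.e.\ $\b_i(g)=0$; a matrix that is both skew-symmetric and block-lower-triangular with vanishing diagonal blocks is zero. With this single good $Q$ in hand the bad set is a proper algebraic subset and the lemma follows. Your proposal defers precisely this step, which is the heart of the proof.

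Moreover, your intended mechanism for the ``purely rotational'' case rests on a false claim. The splitting of $g$ into its block-diagonal rotational part and its strictly block-lower-triangular part is basis-dependent: the strictly lower part need not lie in $[\s,\s]$, nor even in $\s$ (take $\s$ one-dimensional), and skew-symmetry of $g$ with respect to an arbitrary inner product does not pass to the two summands of such a decomposition. In fact there exist semisimple $g$ with purely imaginary eigenvalues and nonzero strictly lower part that \emph{are} skew-symmetric for suitable inner products (conjugate a genuine rotation by a lower-triangular map and pull back the standard inner product); your argument would conclude that no inner product makes such a $g$ skew, which is false. What solvability does give you --- elements of $[\s,\s]$ are nilpotent, hence never skew with respect to any inner product --- is true but does not touch the rotational elements, which is exactly where the scaling trick above is needed.
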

\begin{proof}
Let $\cB_0$ be an arbitrary basis for $V$ and let $\ip_0$ be the inner product for which $\cB_0$ is orthonormal. Then, relative to $\cB_0$, the inner products $\ip$ on $V$ are in one-to-one correspondence with symmetric positive definite matrices $Q$, so that $\<X,Y\>=\<QX,Y\>_0$, for all $X,Y \in V$. Choose a basis for $\s$ represented by the matrices $M_1, \dots, M_m$ relative to $\cB_0$. The fact that for the inner product corresponding to a matrix $Q$ the subalgebra $\s$ contains a nonzero skew-symmetric operator is equivalent to the fact that the matrices $QM_i+M_i^tQ$ are linearly dependent, which is a polynomial condition for the entries of $Q$. It follows that it suffices to find at least one $Q$ for which it is violated, which is equivalent to finding a basis for $V$ relative to which no nonzero element of $\s$ is represented by a skew-symmetric matrix. Take a basis $\cB$ constructed in Lemma~\ref{l:reallie} and modify it as follows: for every pair of basis vectors corresponding to a $2 \times 2$ diagonal block $G_i$, multiply one of them by $2$. Then no nonzero matrix from $\s$ is skew-symmetric relative to the resulting basis.
\end{proof}

\section{Abelian nilradical and the proof of Theorem~2}
\label{s:abelian}

Theorem~\ref{t:neg} implies the Corollary which answers our question for solvable Lie algebras with the abelian nilradical. For the convenience of the reader, we reproduce it here (with a slight change of notation).

\begin{reptheorem}{t:neg}
Suppose $\g$ is a solvable Lie algebra. Let $\n$ be the nilradical of $\g$ and $\z$ be the centre of $\n$. Then
\begin{enumerate}[{\rm (1)}]
  \item
  If $\g$ admits an inner product of negative Ricci curvature, then there exists $X \in \g$ such that $\Tr \ad_X > 0$ and all the eigenvalues of the restriction of the operator $\ad_X$ to $\z$ have a positive real part;

  \item
  If there exists $X \in \g$ such that all the eigenvalues of the restriction of $\ad_X$ to $\n$ have positive real part, then $\g$ admits an inner product of negative Ricci curvature.
\end{enumerate}
\end{reptheorem}

\begin{proof}
(1) First of all, note that $\z$ is an ideal of $\g$, so for any $Y \in \g$, the restriction of $\ad_Y$ to $\z$ is well-defined. Moreover, as $\z$ is abelian, all such restrictions commute. Suppose that for an inner product $\ip$ on $\g$, the Ricci curvature is negative.

Take $X=f_1$. Let $\xi \in \bc$ be an eigenvalue of the restriction of $\ad_{f_1}$ to $\z$ and let $V_\xi \subset \z^\bc$ be the corresponding eigenspace. As the operators $(\ad_{f_j})_{|\z^\bc}$ commute, the subspace $V_\xi$ is $\ad_{f_j}$-invariant, for all $j=1, \dots, m$, and moreover, the restrictions of $\ad_{f_j}$ to $V_\xi$ again commute. By the Lie Theorem (or by Frobenius Theorem, \cite{New}), the latter have a common eigenvector, so that there exists $Z \in V_{\xi} \setminus 0$ such that $\ad_{f_j}Z=\xi_j Z$, and $\xi_1=\xi$. Let $Z=Z_1+Z_2 \mathrm{i}, \; Z_1, Z_2 \in \z$. Multiplying $Z$ by a nonzero complex number we can assume that $\|Z_1\|=1$ and that $Z_1 \perp Z_2$ (note that $Z_2$ can be zero). We have $A_jZ_1=\a_j Z_1 - \b_j Z_2, \; A_j Z_2=\a_j Z_2+ \b_j Z_1$, where $\xi_j=\a_j + \b_j \mathrm{i}, \; j=1, \dots, m$. Computing $\<R_1Z_1,Z_1\>+\<R_1Z_2,Z_2\>$ by \eqref{eq:r1} and \eqref{eq:riccAln} we obtain $\<(-\tfrac12 \sum_{i=1}^l D_i^tD_i +\tfrac14 \sum_{i=1}^l D_iD_i^t)Z_k,Z_k\>=\tfrac14 \sum_{i=1}^l \|D_i^tZ_k\|^2 \ge 0$, for $k=1,2$, as $D_iZ_k=0$, so $\<\Ric^\n Z_k,Z_k\> \ge 0$. Moreover, $\<B_jB_j^tZ_k,Z_k\> \ge 0$. Furthermore, for $j=1, \dots, m$ we have $\<([A_j,A_j^t]Z_1,Z_1\>+\<([A_j,A_j^t]Z_2,Z_2\>= \|A_j^tZ_1\|^2-\|A_jZ_1\|^2+\|A_j^tZ_2\|^2-\|A_jZ_2\|^2 = \sum_{i=1}^{l}(\<A_je_i,Z_1\>^2+\<A_je_i,Z_2\>^2) - \|A_jZ_1\|^2 - \|A_jZ_2\|^2$. Specifying the orthonormal basis $\{e_i\}$ in such a way that $e_1=Z_1$ and $e_2=\|Z_2\|^{-1}Z_2$ if $Z_2 \ne 0$ (or arbitrary otherwise) we find that the latter expression equals $\sum_{i=3}^{l}(\<A_je_i,Z_1\>^2+\<A_je_i,Z_2\>^2)$ (or $\sum_{i=2}^{l}\<A_je_i,Z_1\>^2$, respectively). So in the both cases, $\<([A_j,A_j^t]Z_1,Z_1\>+\<([A_j,A_j^t]Z_2,Z_2\> \ge 0$. Finally we have $\<A_1^sZ_1,Z_1\>+\<A_1^sZ_2,Z_2\>= \<A_1Z_1,Z_1\>+\<A_1Z_2,Z_2\>= \a_1(1+\|Z_2\|^2)$, It follows that $\<R_1Z_1,Z_1\>+\<R_1Z_2,Z_2\> \ge \a_1(1+\|Z_2\|^2) \Tr A_1$, which implies $\Re(\xi) \Tr A_1 < 0$.

(2)
By Lemma~\ref{l:reallie}, we can choose a basis $\mathcal{B}'=\{X_i\}_{i=1}^l$ for $\n$ relative to which all the operators $(\ad_Y)_{|\n}$, $Y \in \g$, are block-lower-triangular, with $p$ $(1 \times 1)$-blocks and $q$ $(2 \times 2)$-blocks, where $2q+p=l, \; q, p \ge 0$.

For $a=1, \dots, p+q$, let $l_a \in \{1,2\}$ be the dimension of the $a$-th diagonal block (counting from the top-left to the bottom-right corner), and let $N$ be a diagonal matrix (relative to $\cB'$) defined by $N=\diag(d_1I_{l_1}, d_2I_{l_2}, \dots, d_{p+q}I_{l_{p+q}})$, where $d_a, \; a=1, \dots, p+q$, is a sequence of positive numbers such that $d_a+d_b < d_{\max(a,b)+1}$ (say $d_a=3^a$).

Extend the basis $\mathcal{B}'$ to a basis $\mathcal{B}$ for $\g$ by elements $\{Y_k\}_{k=1}^m$, and for $s \in \br$, define $T_s \in \End(\g)$ by $T_sX=e^{sN}X$ for $X \in \n$ and $T_sY_k=Y_k$. Let $\mu$ be the Lie bracket of $\g$ and let $\nu=\lim_{s \to \infty}T_s.\mu$. By the choice of the $d_a$, it follows that $\nu(X_i,X_j)=0$; moreover, as $d_a > 0$, we have $\nu(Y_k,Y_r)=0$. Again, by the choice of $d_a$, it follows from that for the bracket $\nu$ the matrices of $(\ad_{Y_k})_{|\n}$ have the block-diagonal form, with the same diagonal blocks as that for the bracket $\mu$.

It follows that the Lie algebra $\bar\g$ defined on the linear space of $\g$ by the bracket $\nu$ is solvable, with the abelian nilradical $\n$ and with an abelian linear complement $\Span(Y_1, \dots, Y_m)$ to $\n$ (note that the nilradical does not increase, as the complex eigenvalues of $(\ad_{Y_k})_{|\n}$ do not change when we pass from $\mu$ to $\nu$).

To finish the proof, by Proposition~\ref{p:clo}, it suffices to construct an inner product on $\bar\g$ whose Ricci curvature is negative, provided that for some $Y \in \Span(Y_1, \dots, Y_m)$ all the eigenvalues of $(\ad_Y)_{|\n}$ (relative to $\nu$) have positive real part. Without loss of generality, suppose that the latter condition is satisfied for $Y=Y_1$ and first choose an inner product $\ip_0$ on $\bar \g$ such that the basis $\cB$ is orthonormal. From the above, for the matrices defined by \eqref{eq:adad} we have $D_i=0, \; B_j=0$, and moreover, the matrices $A_j$ are normal. Then by \eqref{eq:r2}, $R_2=0$ and by \eqref{eq:r1} and \eqref{eq:riccAln}, $R_1= - t A_1^s$, which is negative definite by assumption. Now by Lemma~\ref{l:nonskew} applied to the abelian subalgebra $\Span(Y_1, \dots, Y_m) \subset \gl(\n)$, we can slightly perturb the inner product $\ip_0$ on $\n$ only in such a way that no nontrivial linear combination of the matrices $A_j$ is skew-symmetric. Then by \eqref{eq:r3}, the matrix $R_3$ for the resulting inner product is negative definite. Moreover, $R_2$ is still zero, as we only change the inner product on $\n$, and $R_1$ remains negative definite, if the perturbation is small enough.
\end{proof}

\section{Heisenberg nilradical: proof of Theorem~3}
\label{s:heis}

Let $\g$ be a solvable Lie algebra whose nilradical $\n$ is the Heisenberg Lie algebra of dimension $l=2p+1, \; p \ge 1$ (see eg \cite{RW} for more details of
such algebras). Let $\mathcal{B}=\{X_1, \dots, X_{2p}, Z\}$ be a basis for $\n$ such that $[X_i, X_{p+i}]=Z$ for $i=1, \dots, p$
(and $[X_i,X_j]=0$ if $|i-j| \ne p$) and let $Z$ span the centre $\z$ of $\n$. Then for every $Y \in \g \setminus \n$ there is a well-defined number
$\la(Y)$ such that $[Y,Z]=\la(Y)Z$ and a well-defined linear map $\Phi(Y) := \pi (\ad_Y)_{|\n} \pi^{-1} \in \End(\n/\z)$, where $\pi:\n \to \n/\z$ is the
natural projection (formally one may think that $\pi^{-1}(X)$ is {\it any} $\widetilde{X}\in \n$ such that $\pi(\widetilde{X})=X$ in the definition of $\Phi$).
Let $d_i(Y) \in \bc$, $i=1, \dots, 2p$, be the eigenvalues of $\Phi(Y)$, each listed with its algebraic multiplicity.
The following theorem is stated in the Introduction.

\begin{reptheorem}{t:heis}
A solvable Lie algebra $\g$ with the Heisenberg nilradical admits an inner product of negative Ricci curvature if and only if there exists $Y_+ \in \g$ such that in the above notation,
\begin{equation}\label{eq:heissuf}
    \la(Y_+) + \sum\nolimits_{i: \Re \, d_i(Y_+) < 0} \Re \, d_i(Y_+) > 0.
\end{equation}
\end{reptheorem}

\begin{remark}\label{rem:derheis}
For every element $Y \in \g \setminus \n$, the matrix of the restriction $A_Y$ of $\ad_Y$ to $\n$ relative to $\cB$ has the form $A_Y=\begin{pmatrix} N_Y & 0 \\ v_Y^t & \la(Y) \end{pmatrix}$, where $\la$ is a one-form on $\g$ which vanishes on $\n$, and $JN_Y+N_Y^tJ=\la(Y)J$, where the $2p \times 2p$ matrix $J$ is defined by $J=\begin{pmatrix} 0 & I_p \\ -I_p & 0 \end{pmatrix}$. Then $J^2=-I_{2p}$ and it easily follows that $\Tr N_Y=p \la(Y)$. Moreover, the matrices $N_Y, \; Y \in \g \setminus \n$, commute by the Jacobi identity, as for any $Y,Y' \in \g \setminus \n$ we have $[Y,Y'] \in \n$, so $[A_Y,A_{Y'}]=\begin{pmatrix} 0_{2p \times 2p} & 0 \\ * & 0 \end{pmatrix}$.

Note that $N_Y$ is the matrix of $\Phi(Y)$ relative to the basis $\{X_1, \dots, X_{2p}\}$.
\end{remark}

\begin{proof}
\textbf{Sufficiency. }
Choose a basis $Y_j, \; j=1, \dots ,m$, which complements $\cB$ to a basis for $\g$ in such a way that $Y_1=Y_+$ (this is possible as $\la(Y_+) > 0$ by \eqref{eq:heissuf}, so $Y_+ \notin \n$) and $\la(Y_2)= \ldots = \la(Y_m)=0$. Adding to $Y_j$'s appropriate linear combinations of the $X_i$'s we can always assume that $v_{Y_j}=0$ (note that this does not violate condition \eqref{eq:heissuf} for $Y_1$). Then we have $[A_{Y_j}, A_{Y_k}]=0$ (by Remark~\ref{rem:derheis}), so $[Y_j, Y_k]=\a_{jk}Z$, for all $j,k=1, 2, \dots, m$. But then the Jacobi identity on the triples $(Y_1,Y_j,Y_k), \; 1 < j < k$, implies $\a_{jk}=0$, so we get $[Y_1, Y_j]=a_j Z$ and $[Y_j, Y_k]=0$ for $j,k \ge 2$.

To construct an inner product of negative Ricci curvature on $\g$ we use Proposition~\ref{p:clo}. On the first step, we eliminate the $a_j$'s as follows. For $s \in \br$, define $T_s \in \GL(\g)$ by $T_sY_j=Y_j, \; T_sX_i=e^sX_i, \; T_sZ=e^{2s}Z$ for $j=1, \dots, m, \; i = 1, \dots, 2p$. If $\mu$ is the Lie bracket of $\g$ and $\nu=\lim_{s \to \infty} T_s.\mu$, then for the Lie algebra $\bar \g$ of $\nu$ relative to the basis $\cB$ we have
\begin{equation}\label{eq:heislimit}
    [Y_j,Y_k]=0, \quad (\ad_{Y_j})_{|\n}=\begin{pmatrix} N_j & 0 \\ 0 & \la_j \end{pmatrix}, \quad [X_i, X_r]=j_{ir}Z,
\end{equation}
for $i,r = 1, \dots, 2p, \; j,k=1, \dots, m$, where the matrices $N_j$ commute and satisfy the equation $JN_j+N_j^tJ=\la_jJ$
(and in particular, $\Tr N_j = p \la_j$),  $J=\bigl(j_{ir}\bigr)_1^{2p}$. From the latter equation it follows that the matrices $M_j=N_j-\frac12 \la_j I$ span an $m$-dimensional abelian subalgebra in $\mathfrak{sp}(2p, \br)$. Denote $S_j$ the semisimple part of $M_j$. Note that the matrices $S_j$ commute, are linearly independent (otherwise the nilradical of $\g$ would be bigger) and the maps $\begin{pmatrix} S_j + \frac12 \la_j I& 0 \\ 0 & \la_j \end{pmatrix}$
(relative to $\cB$) are still derivations of $\n$ (see e.~g. \cite[\S 14]{Che}). From \cite[Proposition~11.14]{Ric} it follows that the closure of the orbit of $\Span(M_j)$ under the simultaneous adjoint action of the group $\mathrm{Sp}(2p, \br)$ contains the abelian subalgebra $\Span(S_j)$. Let $h_N \in \mathrm{Sp}(2p, \br), \; N \in \mathbb{N}$, be a sequence of matrices such that $\lim_{N \to \infty} h_N^{-1}M_jh_N=S_j$, for all $j=1, \dots, m$, and define the operators $H_N$ by $H_NY_j=Y_j,\; H_NZ=Z, \; H_N X_i = h_NX_i$. As the group $\mathrm{Sp}(2p, \br)$ acts by automorphisms on the Heisenberg Lie algebra $\n$ we obtain that the Lie bracket $\rho=\lim_{N \to \infty}H_N.\nu$ is given by \eqref{eq:heislimit}, with the $N_j$'s being replaced by their semisimple parts $S_j + \frac12 \la_j I$. By Proposition~\ref{p:clo}, it is sufficient to find an inner product for which the Ricci curvature of the Lie algebra with the bracket $\rho$ is negative. To simplify the notation, in the remaining part of the proof we will keep the notation $\mu$ (instead of $\rho$) for this new Lie bracket and $\g$ for the corresponding Lie algebra.

Recall that the vectors $Y_j$ were chosen in such a way that $\la_1 > 0, \; \la_2= \dots = \la_m = 0$. The following lemma shows that it suffices to construct an inner product of negative Ricci curvature for the one-dimensional extension of $\n$ by $Y_1$.
\begin{lemma}\label{l:heissufreduction}
Let $\ip$ be an inner product on the subalgebra $\g_1=\br Y_1 \oplus \n$ such that $Y_1 \perp \n$ and $Z \perp \m:=\Span(X_1, \dots, X_{2p})$. In the notation of Section~\ref{ss:ric} choose an orthonormal basis $\{e_1, \dots, e_{2p},e_l,f_1\}$ for $(\g_1,\ip)$ in such a way that $f_1 \parallel Y_1$ and $e_l \parallel Z$. Suppose that the matrix $R_1$ defined by \eqref{eq:r1} for $(\g_1,\ip)$ relative to the basis $\{e_1, \dots, e_{2p},e_l,f_1\}$ is negative definite. Then $\g$ admits an inner product of negative Ricci curvature.
\end{lemma}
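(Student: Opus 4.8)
The plan is to extend the given inner product from $\g_1$ to $\g$ as cheaply as possible and to check that the resulting Ricci operator is block diagonal with both diagonal blocks negative definite. Write $\ag=\Span(Y_1,\dots,Y_m)$; by \eqref{eq:heislimit} it is an abelian complement of $\n$, and each $A_k:=(\ad_{Y_k})_{|\n}$ preserves the splitting $\n=\m\oplus\z$ with $\z=\br Z$, acting as $\diag(N_k,\la_k)$, where $N_1=S_1+\tfrac12\la_1 I$ and $N_k=S_k$ for $k\ge 2$ (recall $\la_1>0$ and $\la_k=0$ for $k\ge2$). I would extend $\ip$ by declaring $Y_2,\dots,Y_m$ pairwise orthogonal, orthogonal to $\g_1$, and of common norm $c$, where $c$ is a large constant to be fixed last; then $f_1=\|Y_1\|^{-1}Y_1$ is the unit mean curvature vector while $\Tr\ad_{f_j}=\Tr S_j=0$ for $j\ge2$, so the normalization of Section~\ref{ss:ric} is in force.

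The crucial point is that the off-diagonal block $R_2$ of \eqref{nnteor} vanishes. Since $\ag$ is abelian, all the matrices $B_j$ of \eqref{eq:adad} are zero, so \eqref{eq:r2} reduces to $R_2=-\tfrac12\sum_i D_i^tC_i$, and the $k$-th column of $C_i$ equals $-A_ke_i$. Now each $A_k$ preserves $\m$ and $\z$, whereas for the Heisenberg nilradical the operators $D_i=(\ad_{e_i})_{|\n}$ satisfy $\operatorname{im}D_i\subseteq\z$ and $\z\subseteq\ker D_i$, so (using $\z\perp\m$) the adjoint $D_i^t$ annihilates $\m$ and hence $D_i^tA_ke_i=0$ for all $i,k$. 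Thus $R_2=0$, the Ricci operator is $\Ric=\diag(R_1,R_3)$, and it suffices to make each diagonal block negative definite.

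For the $\ag$-block, $B_j=0$ turns \eqref{eq:r3} into $R_3=-L$ with $L_{pq}=\Tr(A_p^sA_q^s)$, the Gram matrix of $A_1^s,\dots,A_m^s$ with respect to the trace form; this is negative definite exactly when the symmetric parts $A_j^s$ are linearly independent. The operators $A_j$ themselves are independent (as $\la_1\ne0$ and the $S_j$ are), so by Lemma~\ref{l:nonskew} independence of the $A_j^s$ holds for an open dense set of inner products on $\n$. Because negative definiteness of the hypothesis matrix $R_1$ for $(\g_1,\ip)$—call it $R_1^{\g_1}$—is an open condition, I can first replace $\ip$ by a small perturbation of its restriction to $\m$ (keeping $\z\perp\m$) so that both $R_1^{\g_1}<0$ and the $A_j^s$ are independent; then $R_3<0$ for every value of $c$. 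For the $\n$-block, \eqref{eq:r1} gives $R_1=R_1^{\g_1}+\tfrac12\sum_{j\ge2}[A_j,A_j^t]=R_1^{\g_1}+\tfrac1{2c^2}\sum_{j\ge2}\diag([S_j,S_j^t],0)$, which tends to the negative definite matrix $R_1^{\g_1}$ as $c\to\infty$.

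Fixing $c$ large enough then makes $R_1<0$ as well, whence $\Ric=\diag(R_1,R_3)<0$ and the lemma follows. I expect the only genuinely delicate step to be the vanishing of $R_2$: it is precisely here that the Heisenberg structure (each $D_i$ has both image and kernel controlled by the centre $\z$) combines with the fact that every $\ad_{Y_k}$ respects the decomposition $\n=\m\oplus\z$. The remaining ingredients are the openness of negative definiteness, the density statement of Lemma~\ref{l:nonskew}, and the scaling by $c$ that suppresses the failure of the $S_j$ to be normal for the chosen inner product.
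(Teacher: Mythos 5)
Your proposal is correct and follows essentially the same route as the paper's own proof: extend the metric by an orthogonal abelian complement $\Span(Y_2,\dots,Y_m)$ scaled by a parameter (your $c$ is the paper's $\ve^{-1}$), deduce $R_2=0$ from the block structure of the $D_i$, $C_i$ and the $A_j$, suppress the extra $\tfrac12\sum_{j\ge2}[A_j,A_j^t]$ contribution to $R_1$ by the scaling, and make $R_3=-L$ negative definite via Lemma~\ref{l:nonskew} together with a small perturbation of the inner product on $\m$ only. The single imprecision is that, since your perturbation is constrained to $\m$ (keeping $\z\perp\m$), Lemma~\ref{l:nonskew} should be applied to the restrictions of the $A_j$ to $\m$ (an abelian subalgebra of $\gl(\m)$), not to inner products on all of $\n$ --- density in the unconstrained family does not formally yield density in your constrained one --- and this is exactly how the paper invokes the lemma, the $\z$-block forcing $c_1=0$ in any would-be skew-symmetric combination.
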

\begin{proof}
Extend the inner product $\ip$ on $\g_1$ to the inner product $\ip_\ve$ on $\g$ defined by the ortonormal basis $\{e_1, \dots, e_{2p},e_l,f_1, f_2=\ve Y_2, \dots, f_m=\ve Y_m\}$, where $\ve > 0$ (so that, in particular, the restriction of $\ip_\ve$ to $\g_1$ coincides with $\ip$). As $[Y_i, Y_j]=0$, we have $B_j=0$ in the notation of \eqref{eq:adad}. Moreover, $D_l=0$ and for $i < l$, the matrix $D_i$ defined by \eqref{eq:adad} may only have nonzero entries in its bottom row, while the bottom row of $C_i$ is zero, as $[Y_j, \m] \subset \m$. It follows that $R_2=0$. Furthermore, as the inner product $\ip_\ve$ coincides with $\ip$ on $\g_1$, it follows from \eqref{eq:r1} and the assumption that the matrix $R_1$ for $(\g,\ip_\ve)$ can be made negative definite by choosing $\ve$ small enough. Finally, by \eqref{eq:r3}, we have $(R_3)_{jk}=-\Tr(A_j^sA_k^s)$. This matrix is negative semidefinite; it has a zero eigenvalue if and only if the matrices $A_j^s$ are linearly dependent, that is, if and only if a nontrivial linear combination of the $A_j$'s is skew-symmetric. By Lemma~\ref{l:nonskew} applied to the restrictions of the $A_j$ to $\m$ (which span an abelian subalgebra of $\gl(\m)$), we can slightly perturb the inner product $\ip_\ve$ on $\m$ only, so that $R_3$ will become negative definite. Note that $R_1$ will remain negative definite, if the perturbation is small enough and $R_2$ will remain zero, since we are only changing the inner product on $\m$.
\end{proof}

To finish the proof it remains to construct an inner product $\ip$ on $\g_1$ such that $Y_1 \perp \n, \; Z \perp \m$ and $R_1$ is negative definite. Note that for $Y_1 \in \g_1$, the eigenvalues of $(\ad_{Y_1})_{|\n}$ are the same as those for $Y_1 \in \g$. From now on we omit the subscript $1$ in $Y_1, N_1, S_1$ and $\la_1$. According to \eqref{eq:heislimit} we have $(\ad_Y)_{|\m}=N, \; [Y,Z]=\la Z$, with $\Tr N = p\la$ and the matrix $S=N-\frac12 \la I$ being Hamiltonian and semisimple. By \cite[Section~3]{LM}, there exists a basis $\cB'$ for $\m$ relative to which the matrices $S$ and $J$ simultaneously have a (canonical) block-diagonal form $S=\diag(G_1, \dots, G_r), \; J=\diag(J_{2q_1}, \dots, J_{2q_r})$, where $q_i \in \{1,2\}$,  $J_q=\begin{pmatrix} 0 & I_q \\ -I_q & 0 \end{pmatrix}$ and the diagonal blocks $G_i$ have one of the following forms $G_i^{(1)}, G_i^{(2)}, G_i^{(3)}$:
\begin{equation}\label{eq:canhamilton}
    \begin{array}{l}
       G_i^{(1)}=\begin{pmatrix} 0 & \nu_i \\ -\nu_i & 0 \end{pmatrix} \text{ or } \\
      G_i^{(2)}=\begin{pmatrix} \mu_i & 0 \\ 0 & -\mu_i \end{pmatrix}
    \end{array}
    \text{ if } q_i=1, \qquad G_i^{(3)}=\begin{pmatrix} \mu_i & \nu_i & 0 & 0 \\ -\nu_i & \mu_i & 0 & 0 \\ 0 & 0 & -\mu_i & -\nu_i \\ 0 & 0 & \nu_i & -\mu_i \end{pmatrix} \text{ if } q_i=2,
\end{equation}
where $\mu_i \ge 0,\, \nu_i \ne 0$. Then the matrix $N$ has the same block-diagonal decomposition, with the diagonal blocks $G_i + \frac12 \la I_{2q_i}$. The eigenvalues of $N$ are respectively $\frac12 \la \pm \nu_i \mathrm{i}$ or $\pm \mu_i+\frac12 \la$, for every $2 \times 2$ block $G_i+\frac12 \la I_2$ and $\frac12 \la \pm \mu_i \pm \nu_i \mathrm{i}$, for every $4 \times 4$ block $G_i+\frac12 \la I_2$.

Introduce an inner product $\ip$ on $\g_1$ as follows. In the notation of Section~\ref{ss:ric}, set $f_1=Y$ and $e_m=\xi Z, \; \xi \ne 0$. Furthermore, define the inner product on $\m$ so that its matrix $Q$ relative to the basis $\cB'$ has the block-diagonal decomposition $Q=\diag(Q_1, \dots, Q_r)$ which agrees with that for $S$ and $J$, where $Q_i=a_iI_{2q_i}, \; a_i > 0$ (so $Q$ is diagonal and the basis $\cB'$ is orthogonal). We now define $e_i, \; i \le 2p$, to be proportional to the elements of $\cB'$ with the corresponding coefficients (so that $e_i$ are unit) and compute $R_1$ according to \eqref{eq:r1}. Note that relative to the chosen orthonormal basis the matrix $A$ is normal, its symmetric part is diagonal and $t=\Tr A=(p+1)\la$. A direct computation shows that the matrix $R_1$ is diagonal, with
\begin{equation}\label{eq:R1mm}
(R_1)_{mm}=\frac12 \xi^{-2} \sum\nolimits_{i=1}^r q_ia_i^{-2}-(p+1) \la^2,
\end{equation}
and with the $(2p) \times (2p)$-submatrix $\bar R$ in the top left-hand corner having the block-diagonal decomposition $\bar R=\diag(\bar R_1, \dots, \bar R_r)$ which agrees with that for $S$ and $J$, where $\bar R_i$ is the $(2q_i) \times (2q_i)$ matrix of the forms
\begin{align*}
        \bar R_i^{(1)}&=(-\tfrac12 a_i^{-2}\xi^{-2}-\tfrac12 (p+1)\la^2) I_2, \\
        \bar R_i^{(2)}&=\diag(-\tfrac12 a_i^{-2}\xi^{-2}-(p+1)\la(\tfrac12 \la+\mu_i), -\tfrac12 a_i^{-2}\xi^{-2}-(p+1)\la(\tfrac12 \la-\mu_i)), \\
        \bar R_i^{(3)}&=\diag((-\tfrac12 a_i^{-2}\xi^{-2}-(p+1)\la(\tfrac12 \la+\mu_i)) I_2, (-\tfrac12 a_i^{-2}\xi^{-2}-(p+1)\la(\tfrac12 \la-\mu_i)) I_2),
\end{align*}
for the corresponding blocks in \eqref{eq:canhamilton}. It follows that $\bar R_i^{(1)}$ is always negative definite.
Moreover, if $\mu_i \le \frac12 \la$, then $\bar R_i^{(2)}$ and $\bar R_i^{(3)}$ are also negative definite (recall that $\mu_i \ge 0$).
If $\mu_i > \frac12 \la$, then for $\bar R_i^{(2)}$ (respectively, $\bar R_i^{(3)}$) to be negative definite, we have to choose $a_i$ so
that $a_i^{-2}\xi^{-2}>-2(p+1)\la(\tfrac12 \la-\mu_i)$. Finally, according to \eqref{eq:R1mm}, for $(R_1)_{mm}$ to be negative, we can choose $a_i^{-2}>0$ to
be arbitrarily small for those $i$'s which correspond to the blocks $\bar R_i^{(1)}$ and to the blocks $\bar R_i^{(2)}, \bar R_i^{(3)}$ with
$\mu_i \le \frac12 \la$. Using the above inequalities for the remaining $i$'s we obtain that for $(R_1)_{mm}<0$ it is sufficient that
$\la + \sum_{i: \mu_i > \frac12 \la} q_i (\tfrac12 \la-\mu_i) > 0$. But every $\tfrac12 \la-\mu_i$ is the real part of an eigenvalue of the restriction of
$\ad_Y$ to $\m$, with the corresponding multiplicity $q_i$, so that the latter inequality is equivalent to \eqref{eq:heissuf}.

\smallskip
\textbf{Necessity. }Let $\g$ be a solvable Lie algebra whose nilradical $\n$ is the Heisenberg Lie algebra of dimension $l=2p+1$. Suppose $\g$ admits an inner product $\ip$ of negative Ricci curvature. Choose an orthonormal basis $\{e_i,f_j\}$ for $\g$ as in Section~\ref{ss:ric}. We will prove that the vector $f_1$ satisfies inequality \eqref{eq:heissuf}. The proof only uses the fact that the matrix $R_1$ defined by \eqref{eq:r1} is negative definite.

Specify the basis $\{e_i\}$ further, so that $e_{2p+1}$ spans the centre of $\n$ and let $[f_1,e_{2p+1}]=\la e_{2p+1}$ (without loss of generality we may assume that $\la > 0$). Note that $t=\Tr \ad_{f_1}=(p+1)\la$ $(> 0)$. Denote $\m=\n \cap e_{2p+1}^\perp$, and introduce a skew-symmetric operator $K \in \End(\m)$ by $[X_1,X_2]=\<KX_1,X_2\>e_{2p+1}$ for $X_1,X_2 \in \m$.

Relative to the basis $e_i$ for $\n$ the matrices $A_j$ has the form $A_j=\begin{pmatrix} N_j & 0 \\ v_j^t & \la_j \end{pmatrix}$, where $v_j \in \br^{2p}$,
$\la_1=\la$ and $\la_j=0$ for $j > 1$ (Remark~\ref{rem:derheis}). For every eigenvalue $d \in \bc$ of the operator $N_1$ acting on $\m^\bc$ let
$V_d \subset \m^\bc$ be its root subspace, so that $V_d=\cup_{k=1}^\infty \Ker (N_1-d \, \id_\m)^k$. Clearly $V_{\bar d} = \overline{V_d}$ and $\m^\bc=\oplus_{d} V_d$. Let $V_-=\oplus_{d: \Re(d)<0} V_d$ and denote $\m_-=V_- \cap \m$ and $m_-=\dim \m_-$. Note that $m_-$ is the number of the eigenvalues of $N$ with negative real part, counted with their algebraic multiplicity. Denote $\pi_-$ the orthogonal projection to $\m_-$ (both from $\m$ and from $\n$ -- the meaning will be clear from the context).

We want to compute $\Tr (R_1 \pi_-)$. To estimate it we use the following Lemma.
{
\begin{lemma}\label{l:m-}
{\ }
\begin{enumerate}[{\rm (a)}]
  \item \label{it:m-1}
  $[\m_-,\m_-]=0$, so that $\<KX_1, X_2\>=0$ for $X_1,X_2 \in \m_-$.

  \item \label{it:m-2}
  $\Tr (K^tK \pi_-) \le \frac12 \Tr (K^tK)$.

  \item \label{it:m-3}
  The subspace $\m_-$ is an invariant subspace of all the $N_j$.

  \item \label{it:m-4}
  $\Tr ([A_j,A_j^t] \pi_-) \ge -\|\pi_-v_j\|^2$, for all $j=1, \dots, m$.

  \item \label{it:m-5}
  $\Tr (A_1^s \pi_-) =\sum_{d: \Re d<0} d =\sum_{d: \Re d <0} \Re d$, where the sum is taken by all the eigenvalues $d$ of $N_1$, counting their algebraic multiplicity.
\end{enumerate}
\end{lemma}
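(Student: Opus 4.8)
The unifying tool is the infinitesimal-symplectic identity satisfied by $N_1$. The plan is to first extract it: writing out the fact that $\ad_{f_1}$ is a derivation of $\n$ applied to a bracket $[X_1,X_2]=\<KX_1,X_2\>e_{2p+1}$ of two elements of $\m$, and using $[f_1,e_{2p+1}]=\la e_{2p+1}$, I would obtain $KN_1+N_1^tK=\la K$. Equivalently, $M:=N_1-\tfrac12\la\,\id$ satisfies $\<KMx,y\>+\<Kx,My\>=0$, so $M$ is skew with respect to the symplectic form $\omega(\cdot,\cdot)=\<K\cdot,\cdot\>$. The standard pairing property of $\omega$-skew operators then gives $\omega(V_{d_1},V_{d_2})=0$ unless $d_1+d_2=\la$. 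Since $\la>0$ is real, the eigenvalue paired with any $d$ of negative real part has real part $\la-\Re d>0$; hence no two eigenspaces inside $V_-$ are $\omega$-paired. This yields \eqref{it:m-1}: $\m_-$ is $\omega$-isotropic, i.e. $\<KX_1,X_2\>=0$ and $[X_1,X_2]=0$ for $X_1,X_2\in\m_-$. Part \eqref{it:m-3} I would dispatch separately and more cheaply: the $N_j$ commute with $N_1$ by Remark~\ref{rem:derheis}, hence preserve every root space $V_d$, hence preserve $V_-$ and, being real, its real points $\m_-$.

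For \eqref{it:m-2} I would translate isotropy into the statement $\pi_-K\pi_-=0$ and work in block form relative to $\m=\m_-\oplus\m_-^\perp$. Then the top-left block of the skew matrix $K$ vanishes, and writing $K_{+-}$ for the lower-left block, skew-symmetry forces the upper-right block to equal $-K_{+-}^t$. A short computation should give $\Tr(K^tK\pi_-)=\|K_{+-}\|^2$ and $\Tr(K^tK)=2\|K_{+-}\|^2+\|K_{++}\|^2$, whence the desired $\Tr(K^tK\pi_-)\le\tfrac12\Tr(K^tK)$.

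Parts \eqref{it:m-4} and \eqref{it:m-5} are block-matrix bookkeeping built on the invariance \eqref{it:m-3}. Expanding $[A_j,A_j^t]$ for $A_j=\begin{pmatrix} N_j & 0 \\ v_j^t & \la_j \end{pmatrix}$, I expect the compression to $\m_-$ to be $\pi_-([N_j,N_j^t]-v_jv_j^t)\pi_-$, so that $\Tr([A_j,A_j^t]\pi_-)=\Tr([N_j,N_j^t]\pi_-)-\|\pi_-v_j\|^2$. Using \eqref{it:m-3} to write $N_j=\begin{pmatrix} P_j & Q_j \\ 0 & S_j \end{pmatrix}$ relative to $\m_-\oplus\m_-^\perp$, the top-left trace of $[N_j,N_j^t]$ collapses to $\|Q_j\|^2\ge0$, giving \eqref{it:m-4}. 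For \eqref{it:m-5} the same block form with $j=1$ reduces $\Tr(A_1^s\pi_-)$ to $\Tr(N_1^s\pi_-)=\Tr P_1$, and since $P_1$ is the compression of $N_1$ to $\m_-=V_-\cap\m$ its characteristic polynomial equals that of $N_1|_{V_-}$; as the eigenvalues with $\Re d<0$ occur in conjugate pairs, $\Tr P_1=\sum_{d:\Re d<0}d=\sum_{d:\Re d<0}\Re d$.

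The main obstacle is \eqref{it:m-1}: everything downstream (the isotropy used in \eqref{it:m-2}, and implicitly the shape of the eigenvalue set in \eqref{it:m-5}) rests on correctly extracting $KN_1+N_1^tK=\la K$ and reading off the pairing $d\leftrightarrow\la-d$, together with the observation that $\la>0$ separates the negative-real-part eigenvalues from their $\omega$-partners. Once isotropy and invariance are secured, parts \eqref{it:m-2}, \eqref{it:m-4}, \eqref{it:m-5} are routine $2\times2$ block computations.
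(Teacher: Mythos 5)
Your proof is correct in all five parts, and for \eqref{it:m-2}, \eqref{it:m-4}, \eqref{it:m-5} it is the paper's argument in different clothing: where you compute with block decompositions relative to $\m=\m_-\oplus\m_-^\perp$ (giving $\Tr(K^tK\pi_-)=\|K_{+-}\|^2$, $\Tr(K^tK)=2\|K_{+-}\|^2+\|K_{++}\|^2$, $\Tr([A_j,A_j^t]\pi_-)=\|Q_j\|^2-\|\pi_-v_j\|^2$ and $\Tr(A_1^s\pi_-)=\Tr P_1$), the paper writes the same quantities as sums over an orthonormal basis adapted to $\m_-$, and part \eqref{it:m-3} is identical. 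The genuine divergence is in \eqref{it:m-1}, the heart of the lemma. The paper first kills $v_1$ by adding an inner derivation, so that $C=\diag(N_1,\la)$ is a derivation of $\n^{\bc}$, and then runs a Leibniz-type induction (in the style of Helgason) on $(C-(d_1+d_2)\id)^k[X_1,X_2]$ to conclude $[V_{d_1},V_{d_2}]=0$ whenever $d_1+d_2\ne\la$, since the bracket lies in the $\la$-eigenline $\br e_{2p+1}$. You instead extract the identity $KN_1+N_1^tK=\la K$ --- which is indeed valid, by the same computation as in Remark~\ref{rem:derheis} with $J$ replaced by $K$; note that the $v_1$-term drops out automatically because $e_{2p+1}$ is central, so you need no preliminary normalization of $v_1$ --- and then quote the standard pairing of generalized eigenspaces for the $\omega$-skew operator $N_1-\tfrac12\la\,\id$, where $\omega=\<K\cdot,\cdot\>$. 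The two arguments are equivalent in substance (the standard pairing lemma is proved by exactly the induction the paper performs on the bracket), so your route trades a self-contained argument for a citation to symplectic linear algebra, consonant with the Hamiltonian canonical forms of \cite{LM} already used in the sufficiency part. Two details worth making explicit in a writeup: first, the pairing lemma is applied to the bilinear extension of $\omega$ to $\m^{\bc}$ and does not require nondegeneracy of $\omega$; second, in \eqref{it:m-5}, identifying the spectrum of $P_1=N_1|_{\m_-}$ with that of $N_1|_{V_-}$ uses $V_-=(\m_-)^{\bc}$, which holds because $V_{\bar d}=\overline{V_d}$ makes $V_-$ conjugation-stable --- the same fact the paper invokes when asserting that $\dim\m_-$ equals the number of eigenvalues of $N_1$ with negative real part.
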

\begin{proof}
(a) The operator $A_1=\begin{pmatrix} N_1 & 0 \\ v_1^t & \la \end{pmatrix}$ is a derivation of $\n$. Adding an appropriate $\ad_X, \; X \in \n$, we can
eliminate $v_1$, so that the operator $C=\begin{pmatrix} N_1 & 0 \\ 0 & \la \end{pmatrix}$ is again a derivation of $\n$. Then $C$ is a derivation of the
complexified algebra $\n^\bc$. Our arguments are similar to \cite[Lemma~III.3.2]{Hel}. Let $X_1 \in V_{d_1}, \; X_2 \in V_{d_2}, \; \Re d_i < 0$.
Then $(N_1-d_1\id)^{k_1}X_1 = (N_1-d_2\id)^{k_2}X_2 =0$, for some $k_1, k_2 \in \mathbb{N}$, so $(C-d_i\id)^{k_i}X_i =0, \; i=1, 2$. As $C$ is a derivation we obtain by induction that $(C-(d_1+d_2)\id)^k[X_1,X_2] = \sum_{i=0}^k \binom{k}{i} [(C-d_1\id)^{i}X_1, (C-d_2\id)^{k-i}X_2]$. The right-hand side vanishes for $k$ large enough, while the left-hand side equals $(\la-(d_1+d_2))^k[X_1,X_2]$, as $[X_1,X_2]$ is a multiple of $e_{2p+1}$. But $\la > 0$ and $\Re d_1, \Re d_2 < 0$, so $[X_1,X_2]=0$. It follows that $[V_-,V_-]=0$, hence $[\m_-,\m_-]=0$.

Then for $X_1,X_2 \in \m_-$ we have $\<KX_1, X_2\>=\<[X_1, X_2],e_{2p+1}\>=0$.

(b) Let $\{e_i\}_{i=1}^{2p}$ be an orthonormal basis for $\m$ such that $\{e_i\}_{i=1}^{m_-}$ is an orthonormal basis for $\m_-$. We have $\<Ke_i, e_j\>=0$ for $i,j \le m_-$ by \eqref{it:m-1}. Then
\begin{gather*}
\Tr (K^tK)=\sum\nolimits_{i,j=1}^{2p} \<Ke_i,e_j\>^2= \sum\nolimits_{i,j=m_-+1}^{2p} \<Ke_i,e_j\>^2+2\sum\nolimits_{i \le m_- < j} \<Ke_i,e_j\>^2, \\
\Tr (K^tK \pi_-)=\sum\nolimits_{i=1}^{m_-} \<K^tKe_i,e_i\>= \sum\nolimits_{i=1}^{m_-}\sum\nolimits_{j=1}^{2p} \<Ke_i,e_j\>^2=\sum\nolimits_{i \le m_- < j} \<Ke_i,e_j\>^2,
\end{gather*}
and the claim follows.

(c) As $N_j$ commutes with $N_1$, for any $j=1, \dots, m$ (Remark~\ref{rem:derheis}), every root subspace $V_d \subset \m^\bc$ of $N_1$ is $N_j$-invariant. Then $V_- \subset \m^\bc$ is also $N_j$-invariant, as is $\m_-$.

(d) Choose an orthonormal basis $\{e_i\}_{i=1}^{2p}$ for $\m$ as in the proof of \eqref{it:m-2} above and extend it to the basis for $\n$ by the vector $e_{2p+1}$. We have
\begin{align*}
\Tr ([A_j,A_j^t] \pi_-) &=\sum\nolimits_{i=1}^{m_-} \<[A_j,A_j^t]e_i,e_i\>=\sum\nolimits_{i=1}^{m_-} (\|A_j^t e_i\|^2-\|A_j e_i\|^2) \\
&= \sum\nolimits_{i=1}^{m_-}\sum\nolimits_{s=1}^{2p+1} \<A_je_s, e_i\>^2- \sum\nolimits_{i=1}^{m_-}\sum\nolimits_{s=1}^{2p+1} \<A_j e_i,e_s\>^2.
\end{align*}
But the first sum equals $\sum\nolimits_{i=1}^{m_-}\sum\nolimits_{s=1}^{2p} \<A_je_s, e_i\>^2$ (as $e_{2p+1}$ is an eigenvector of $A_j$) and the second one, $\sum\nolimits_{i,s=1}^{m_-} \<A_j e_i,e_s\>^2+\sum\nolimits_{i=1}^{m_-} \<A_j e_i,e_{2p+1}\>^2= \sum\nolimits_{i,s=1}^{m_-} \<A_j e_s,e_i\>^2+ \sum\nolimits_{i=1}^{m_-} \<v_j, e_i\>^2$, by \eqref{it:m-3}, so the claim follows.

(e) With the same choice of basis as in the proof of \eqref{it:m-4} above we have $\Tr (A_1^s \pi_-) = \sum\nolimits_{i=1}^{m_-} \<A_1 e_i,e_i\> = \sum\nolimits_{i=1}^{m_-} \<N_1 e_i,e_i\> = \Tr ({N_1}_{|\m_-})$. Extending $N_1$ to $\m^\bc$ we get $\Tr ({N_1}_{|\m_-})=\Tr ({N_1}_{|V_-})=\sum_{d: \Re d<0} \Tr ({N_1}_{|V_d})= \sum_{d: \Re d<0} d \dim V_d$, where the sum is taken by all the eigenvalues $d$ of $N_1$ \emph{without} counting the multiplicity.
\end{proof}
}

Choosing an orthonormal basis $\{e_i\}$ for $\n$ as in the proof of Lemma~\ref{l:m-} (so that $e_1, \dots e_{m_-}$ is a basis for $\m_-$) we obtain by \eqref{eq:riccAln} $\Tr (\Ric^\mathfrak{n} \pi_-)=-\frac{1}{2}\sum\nolimits_{s=1}^{2p+1}\sum\nolimits_{i=1}^{m_-}  \|[e_s,e_i]\|^2 = -\frac{1}{2}\sum\nolimits_{i=1}^{m_-} \|Ke_i\|^2 = -\frac{1}{2} \Tr (K^tK \pi_-)$.

Then using Lemma~\ref{l:m-} we get from \eqref{eq:r1}
\begin{equation} \label{eq:Trpi-}
    \Tr (R_1 \pi_-) \ge -\frac{1}{4} \Tr (K^tK) - \frac{1}{2} \sum\nolimits_{j=1}^{m}\|\pi_-v_j\|^2 - t \sum\nolimits_{d: \Re d <0} \Re d,
\end{equation}
where the sum is taken by all the eigenvalues $d$ of $N_1$, counting their algebraic multiplicity.

On the other hand, from \eqref{eq:r1} and \eqref{eq:riccAln} (and using the fact that $e_{2p+1}$ is a common eigenvector of all the $A_j$) we have
\begin{align*}
\<R_1 e_{2p+1},e_{2p+1}\> &\ge \frac{1}{4} \sum\nolimits_{i,s=1}^{2p} \<[e_i,e_s],e_{2p+1}\>^2 + \frac{1}{2} \sum\nolimits_{j=1}^{m} (\|A_j^te_{2p+1}\|^2-\|A_j e_{2p+1}\|^2) - t \la \\
&= \frac{1}{4} \sum\nolimits_{i,s=1}^{2p} \<Ke_i,e_s\>^2 + \frac{1}{2} \sum\nolimits_{j=1}^{m} \sum\nolimits_{i=1}^{2p} \<A_j^te_{2p+1},e_i\>^2 - t \la \\
&= \frac{1}{4} \Tr (K^tK) + \frac{1}{2} \sum\nolimits_{j=1}^{m} \|v_j\|^2 - t \la.
\end{align*}
Adding this to \eqref{eq:Trpi-} and using the fact that $\pi_-$ is positive semidefinite and $R_1$ is negative definite we get $\la+\sum\nolimits_{d: \Re d <0} \Re d > 0$, which is equivalent to \eqref{eq:heissuf}.
\end{proof}

\section{Filiform nilradical: proof of Theorem~4}
\label{s:fili}

A nilpotent Lie algebra of dimension $l$ is called \emph{filiform} if it has the maximal possible degree of nilpotency ($\Leftrightarrow$ the longest possible lower central series). Filiform Lie algebras have been introduced in \cite{Ver} and have been given a great deal of attention thereafter.

In this section, we consider solvable Lie algebras $\g$ whose nilradical is the so called \emph{standard filiform Lie algebra}. The latter is defined as the $l$-dimensional Lie algebra $L_l$ having a basis $X_1, \dots, X_l$ such that $[X_1, X_i]=X_{i+1}, \; i=2, \dots, l-1, \; [X_1, X_l]=0$, and $[X_i,X_j]=0$ when $i,j \ge 2$. On the question of how restrictive the assumption of standardness is, note that any filiform Lie algebra admits a basis for which the former relations are satisfied (but in general, not the latter ones), and that any filiform algebra of dimension $l$ degenerates to $L_l$. Note also that a ``typical" filiform Lie algebra of dimension $l \ge 8$ is characteristically nilpotent, hence cannot be the nilradical of anything except for itself (any solvable extension of it is nilpotent).

Let $\g$ be a solvable Lie algebra with the nilradical $L_l$. We can assume that $l \ge 4$ (as $L_2$ is abelian and $L_3$ is the Heisenberg algebra). The algebra $L_l$ has a (unique) codimension one abelian ideal $\ig=\Span(X_2, \dots, X_l)$ and the one-dimensional centre $\br X_l$. Both of them are characteristic ideals of $L_l$ (they are invariant under the action of any derivation on $L_l$; see Remark~\ref{rem:fili} below). Let $\la$ and $\iota$ be one-forms on $\g$ defined as follows: for $Y \in \g, \; [Y,X_l]=\la(Y) X_l$ and $\iota(Y)=\Tr((\ad_Y)_{|\ig})$. The following theorem is stated in the Introduction.

\begin{reptheorem}{t:fili}
Let $\g$ be a solvable Lie algebra with the nilradical $\n=L_l, \; l \ge 4$. The algebra $\g$ admits an inner product of negative Ricci curvature if and only if there exists $Y \in \g$ such that $\la(Y) > 0$ and $\iota(Y) > 0$.
\end{reptheorem}

\begin{remark}\label{rem:fili}
It is easy to see (and is well-known) that relative to the basis $\cB=\{X_i\}$ for $L_l$, any derivation of $L_l$ has the lower-triangular form
\begin{equation}\label{eq:derfili}
    \left(
\begin{array}{c@{}c@{}}
 \begin{array}{ccc}
         a & & \\
         & d & \\
        & & a+d \\
  \end{array} & \mbox{\Huge $0$ } \\
  \mbox{\Huge $\ast$ } & \begin{array}{ccc}
                       2a+d & & \\ & \ddots & \\ & & (l-2)a+d \\
                      \end{array}
\end{array}\right),
\end{equation}
where $a,d \in \br$ (with some additional relations on the entries below the diagonal). It follows that any solvable extension of $L_l$ of rank $m \ge 3$ contains a nilpotent derivation $\ad_Y, \; Y \notin L_l$, so that its nilradical is bigger than $L_l$. Therefore $m=1$ or $m=2$. But if $m=2$, there exists $Y \in \g$ such that $(\ad_Y)_{|\n}$ has the form \eqref{eq:derfili}, with $a,d > 0$. Then the inequalities $\la(Y) > 0, \; \iota(Y) > 0$ are trivially satisfied for such $Y$ and the existence of an inner product of negative Ricci curvature follows from Theorem~\ref{t:neg}\eqref{it:neg2}.

We can therefore assume that $m=1$. Then for an arbitrary $Y \notin \n$, the matrix $(\ad_Y)_{|\n}$ has the form \eqref{eq:derfili}, with $a,d$ not simultaneously zero, and the inequalities $\la(Y) > 0$ and $\iota(Y) > 0$ are equivalent to
\begin{equation}\label{eq:filiad}
    (l-2)a+d > 0, \qquad (l-2)a+2d > 0,
\end{equation}
respectively.
\end{remark}

\begin{proof}
By Remark~\ref{rem:fili} we can assume that $\g$ is a one-dimensional extension of $L_l$. Choose and fix a vector $Y \in \g \setminus \n$. Then $(\ad_Y)_{|\n}$ has the form \eqref{eq:derfili}, and $\Tr \ad_Y= (l-1)d+(\frac12 (l-1)(l-2)+1)a=\frac{2}{l-2}((l-2)a+d)+\frac{l(l-3)}{2(l-2)}((l-2)a+2d)$. If $\Tr \ad_Y=0$, then \eqref{eq:filiad} is violated and no inner product with negative Ricci curvature exists, as $\g$ is unimodular (Theorem~\ref{t:dm}). We can therefore assume that $t = \Tr \ad_Y > 0$.

\smallskip
\textbf{Sufficiency.} Suppose the inequalities \eqref{eq:filiad} are satisfied. Let $N$ be a positive derivation of $\n$ which is diagonal relative to $\cB$, say $N = \diag(1,2, \dots,l)$. For $s > 0$ define $T_s \in \End(\g)$ by $T_sY=Y$ and $T_sX=e^{sN}X$, for $X \in \n$. When $s \to \infty$, the Lie algebra $\g$ degenerates to the Lie algebra $\bar g$ with the same nilradical $\n=L_l$ and with $(\ad_Y)_{|\n} = \diag(a,d,a+d,2a+d, \dots, (l-2)a+d)$ (relative to $\cB$). By Proposition~\ref{p:clo} it suffices to construct an inner product of negative Ricci curvature on $\bar \g$. In the notation of Section~\ref{ss:ric} take $f_1=Y$ and $e_i=a_iX_i, \; a_i > 0$, for $i=1, \dots, l$. An easy calculation shows that $R_2=0$ and $R_3 < 0$. Furthermore, from \eqref{eq:r1} we obtain that the matrix $R_1$ is diagonal, with the entries
\begin{gather*}
    (R_1)_{11}=-\sum\nolimits_{i+2}^{l-1}\xi_i-ta,\quad (R_1)_{22}=-\xi_2-td, \quad (R_1)_{ll}=\xi_{l-1}-t((l-2)a+d)), \\ (R_1)_{ii}=\xi_{i-1}-\xi_i-t((i-1)a+d), \text{ for } i=3,\dots, l-1,
\end{gather*}
where $\xi_i=\frac12 a_1^2a_i^2a_{i+1}^{-2}, \; i=2, \dots, l-1$, and $t=\Tr A= (l-1)d+(\frac12 (l-1)(l-2)+1)a > 0$. In a Euclidean space $\br^l$, with an orthonormal basis $E_i$, introduce the vectors $F_1=-E_1-E_2+E_3, \, F_2=-E_1-E_3+E_4, \dots, F_{l-2}=-E_1-E_{l-1}+E_{l}$ and the vectors $V_1=E_1+\sum_{i=3}^l (i-2)E_i, \; V_2=\sum_{i=2}^l E_i$. Then the fact that $R_1$ is negative definite is equivalent to the fact that all the components of the vector $\sum_{i=1}^{l-2}\xi_iF_i-taV_1-tdV_2$ are negative, that is, to the fact that the vector $aV_1+dV_2-t^{-1}\sum_{i=1}^{l-2}\xi_iF_i$ belongs to the first octant of $\br^n$. Note that given any $\xi_i > 0$ we can easily find the corresponding $a_i$'s by taking $a_1, a_2 > 0$ arbitrarily and defining $a_{i+1}=a_1a_i(2\xi_i)^{1/2}$ for $i=2, \dots, l-1$, so for the existence of an inner product
such that $R_1$ is negative definite it is sufficient that $aV_1+dV_2$ belongs to the open convex hull of the vectors $F_1, F_2, \dots, F_{l-2},E_1,E_2, \dots, E_l$. But $E_l=F_{l-2}+E_{l-1}+E_{l-2}$, and then $E_{l-1}=F_{l-3}+E_{l-2}+E_{l-3}$, and so on, up to $E_3=F_1+E_2+E_1$, so that the convex hull of the vectors $F_1, F_2, \dots, F_{l-2},E_1,E_2, \dots, E_l$ is the same as the convex hull of the vectors $F_1, F_2, \dots, F_{l-2},E_1,E_2$. These vectors form a basis for $\br^l$. Expanding $V_1$ and $V_2$ by that basis we get
\begin{gather*}
    V_1=\sum\nolimits_{i=1}^{l-2}\Big(\sum\nolimits_{j=i}^{l-2}j \Big)F_i+(\tfrac16(l-2)(l-1)(2l-3)+1)E_1+\tfrac12(l-2)(l-1)E_2, \\
    V_2=\sum\nolimits_{i=1}^{l-2}(l-1-i)F_i+\tfrac12(l-2)(l-1)E_1+(l-1)E_2,
\end{gather*}
so the vector $aV_1+dV_2$ lies in the open convex hull of the vectors $F_1, F_2, \dots, F_{l-2},E_1,E_2$ if and only if the following inequalities are satisfied:
\begin{gather*}
    \Big(\sum\nolimits_{j=i}^{l-2}j\Big) \, a+(l-1-i)d > 0, \quad i=1, \dots, l-2, \\
    (\tfrac16(l-2)(l-1)(2l-3)+1) a + \tfrac12(l-2)(l-1) d > 0, \quad \tfrac12(l-2)(l-1) a + (l-1)d > 0.
\end{gather*}
These inequalities are equivalent to the inequalities $\kappa_i a + d > 0$, where $\kappa_i > 0$ are defined by $\kappa_i = (\sum\nolimits_{j=i}^{l-2}j)/(l-1-i)$ for $i=1, \dots, l-2$, and $\kappa_{l-1}=(\tfrac16(l-2)(l-1)(2l-3)+1)(\tfrac12(l-2)(l-1))^{-1}, \; \kappa_l=\tfrac12(l-2)$, and hence are equivalent to the two inequalities $\kappa_{\max} a + d > 0, \; \kappa_{\min} a + d > 0$, where $\kappa_{\max} = \max \kappa_i, \; \kappa_{\min} = \min \kappa_i$. As $\kappa_i$ increase when  $i=1, \dots, l-2$ and $\kappa_l < \kappa_1$, and moreover, $\kappa_l < \kappa_{l-1} < \kappa_{l-2}$, we get $\kappa_{\max} = \kappa_{l-2}=l-2, \; \kappa_{\min} = \kappa_l=\tfrac12(l-2)$. This gives $(l-2)a+d > 0, \; (l-2)a+2d > 0$, which is satisfied by the assumption (see \eqref{eq:filiad}, Remark~\ref{rem:fili}).

\smallskip
\textbf{Necessity.} Suppose $\ip$ is an inner product of negative Ricci curvature on $\g$. Take $f_1$ to be the unit vector orthogonal to $\n$ with $\Tr \ad_{f_1} > 0$. The inequality $\la(f_1) > 0$ follows from Theorem~\ref{t:neg}\eqref{it:neg1}.
To prove the inequality $\iota(f_1) > 0$ introduce an orthonormal basis $e_i$ for $\n$ in such a way that $e_1 \perp \ig$. Note that $[e_i,e_j]=0$ for $i,j > 1$, as $\ig$ is abelian, and that $[e_1,\ig] \subset \ig$, as $\ig$ is an ideal. Then by \eqref{eq:riccAln} we obtain
\begin{align*}
\sum\nolimits_{j=2}^{l}\<\Ric^\mathfrak{n}e_j,e_j\> &= -\frac{1}{2} \sum\nolimits_{j=2}^{l}\sum\nolimits_{i=1}^{l} \|[e_i,e_j]\|^2 +\frac{1}{4} \sum\nolimits_{j=2}^{l}\sum\nolimits_{i,k=1}^{l}\<[e_i,e_k],e_j\>^2\\
&=-\frac{1}{2} \sum\nolimits_{j=2}^{l} \|[e_1,e_j]\|^2 +\frac{1}{2} \sum\nolimits_{j,k=2}^{l}\<[e_1,e_k],e_j\>^2\\
&=0.
\end{align*}
It now follows from \eqref{eq:r1} that
\begin{align*}
\sum\nolimits_{j=2}^{l}\<R_1e_j,e_j\> &= \sum\nolimits_{j=2}^{l}\<(\tfrac{1}{2} [A_1,A_1^t]- t A_1^s)e_j,e_j\> \\
&= \Tr(\tfrac{1}{2} [A_1,A_1^t]- t A_1^s)-\<(\tfrac{1}{2} [A_1,A_1^t]- t A_1^s)e_1,e_1\>\\
&= - t (\Tr A_1- \<A_1 e_1,e_1\>)-\tfrac{1}{2} (\|A_1^t e_1\|^2-\|A_1 e_1\|^2).
\end{align*}
The matrix $A_1$ relative to the basis $e_i$ is similar to the matrix \eqref{eq:derfili}, with some $a,d \in \br$ such that $t=\Tr(A_1) > 0$, and as $\ig=\Span(X_2, \dots, X_l)=\Span(e_2, \dots, e_l)$ we have
\begin{equation*}
    A_1=\left(
          \begin{array}{cc}
            a & 0 \dots 0 \\
            v & M \\
          \end{array}
        \right),
\end{equation*}
where $v \in \br^{l-1}$ and $M$ is an $(l-1) \times (l-1)$-matrix with the eigenvalues $d,a+d,2a+d, \dots, (l-2)a+d$. In particular, $\iota(f_1) = \Tr M=\Tr A_1- \<A_1 e_1,e_1\>$ and $\|A_1^t e_1\|^2-\|A_1 e_1\|^2=-\|v\|^2$. Then from the above
\begin{equation*}
    \sum\nolimits_{j=2}^{l}\<R_1e_j,e_j\> = - t \iota(f_1)+\tfrac{1}{2} \|v\|^2,
\end{equation*}
so $\iota(f_1) > 0$, as required.
\end{proof}

\section{Open questions}
\label{s:q}

In this section, motivated by the above theorems (and by the theory of Einstein solvmanifolds \cite{Heb, LW}), we collect some open questions and conjectures.

\begin{enumerate}
  \item
  Is it true that we can always reduce the rank? In other words, is the following true (or at least, one way true): ``a solvable algebra $\g$ with the nilradical $\n$ admits an inner product with negative Ricci curvature if and only if there exists $Y \notin \n$ such that the subalgebra $\br Y \oplus \n \in \g$ admits such an inner product"?

  \item
  Are there some ``best" solvable metric Lie algebras of negative Ricci curvature? More specifically: can we modify a given solvable Lie algebra with $\Ric < 0$ by degeneration and then choose an inner product in such a way that the resulting algebra is nicer -- e.g., the restrictions $(\ad_Y)_{|\n}, \; Y \in \ag$, are semisimple, the subspace $\ag$ is abelian and $\n$ and $\a$ are invariant subspaces of $\Ric$ (so that in the notation of Section~\ref{ss:ric}, $R_2=0$ and hence the negativity of $\Ric$ only depends on $R_1$)?

  \item
  A stronger form of the first question: is it true that a solvable Lie algebra $\g$ admits an inner product with $\Ric < 0$ if and only if there exists a vector $Y \in \g$ such that the real parts of the eigenvalues of the restriction of $\ad_Y$ to the nilradical $\n$ satisfy certain linear inequalities which are determined by the structure of $\n$? Such inequalities may represent the fact that $\Re \, (\ad_Y)_{|\n}$ belongs to a certain open convex hull. An alternative description may be the following. For a linear operator $A$ denote $A^{\br}$ its real semisimple part. If $A$ is a derivation of a Lie algebra $\n$, then $A^{\br}$ also is (see e.g. \cite[Section~3.3]{Heb}). Relative to some basis for $\n$, $A^{\br}$ is diagonal. Consider the space $\mathfrak{t}$ of all the diagonal matrices relative to that basis (more invariantly, the Cartan subalgebra of the centraliser of $A^{\br}$ in $\gl(\n)$) and call $A$ \emph{semipositive} if $A^{\br}$ belongs to the orthogonal projection of the first octant of $\mathfrak{t}$ (the cone of the matrices with all the diagonal entries positive) to $\mathfrak{t} \cap \Der(\n)$ via the restriction of the Killing form of $\gl(\n)$ to $\mathfrak{t}$. Is it true then that a solvable Lie algebra $\g$ admits an inner product of negative Ricci curvature if and only if there exists a vector $Y \in \g$ such that the restriction of $\ad_Y$ to the nilradical $\n$ is semipositive?

\end{enumerate}

\vspace{5mm}

\vspace{10mm}

\end{document}